\theoremstyle{plain}
\newtheorem{teo}{Theorem}[section]
\newtheorem{propo}[teo]{Proposition}
\newtheorem{lem}[teo]{Lemma}
\theoremstyle{definition}
\newtheorem{defin}[teo]{Definition}
\newtheorem{ej}[teo]{Example}
\numberwithin{equation}{section}
\theoremstyle{remark}
\newtheorem{obs}[teo]{Remark}
\begin{document}

\title[Stable and Unstable Manifolds]{Local Stable and Unstable Manifolds for Anosov Families}

\author[J. Muentes]{Jeovanny de Jesus Muentes Acevedo}
\address{Instituto de Matem\'atica e Estat\'istica\\ Universidade de S\~ao Paulo\\
05508-090, Sao Paulo, Brazil}
\email{jeovanny@ime.usp.br}

\date{2017}

\begin{abstract}
Anosov families were introduced by A. Fisher and P. Arnoux motivated by generalizing the notion of   Anosov diffeomorphism defined on a compact Riemannian manifold.   In addition to presenting several properties and examples of   Anosov families, in this paper   we   build    local stable  and local manifolds for such families.  
\end{abstract}

\subjclass[2010]{37D10; 	37D20; 	37B55}

\keywords{Anosov families, stable and unstable manifolds, Hadamard-Perron Theorem, random hyperbolic dynamical systems, non-stationary dynamical systems, non-autonomous dynamical systems}

\maketitle

\section{Introduction}

The Anosov families, which will be presented in Definition  \ref{anosovfamily}, were introduced by P. Arnoux and A. Fisher in  \cite{alb}, motivated by generalizing the notion of   Anosov diffeomorphisms. An Anosov family is a two-sided sequence   of diffeomorphisms    $f_{i}: M_{i}\rightarrow M_{i+1}$   defined on a sequence of  compact Riemannian manifolds $M_{i}$, for $i\in\mathbb{Z}$, having a similar behavior to an Anosov diffeomorphisms: the tangent bundle has a splitting    $TM_{i}=E^{s}\oplus E^{u}$, invariant by the derivative  $D ( f_{i+n}\circ\cdots\circ f_{i})$, and   
 there exist  constants $\lambda \in (0,1)$ and $c>0$  such that for   $n\geq 1$,  $p\in M_{i}$, 
 we have: \(\Vert D (f_{i+n-1}\circ...\circ f_{i})_{p} (v)\Vert \leq c\lambda^{n}\Vert v\Vert\) for     \(v\in E_{p}^{s}\) 
 and
 \(\Vert D (f_{i-n}^{-1}\circ...\circ f_{i-1}^{-1})_{p}(v)\Vert \leq c\lambda^{n}\Vert v\Vert\)  for    \(v\in E_{p}^{u}.\)     
 The  subspaces $E^{s}$ and $E^{u}$ are called     stable and unstable subspaces, respectively.   
 The main objective of this work is to obtain a   local stable and unstable manifold theorem for Anosov families. 

\medskip

 In the next section we  introduce the notion of Anosov family and, moreover, we present some examples of such families.       Readers may find, for example, in       \cite{alb}, \cite{Jeo2}, \cite{Jeo3} and \cite{Mikko},  several approaches and results in non-stationary   dynamic which have a  hyperbolic  behavior.    It is worth noting
that  it is not necessary that the $f_{i}$   be
  an Anosov diffeomorphism for  the family $(f_{i})_{i\in\mathbb{Z}}$ to be Anosov (see   \cite{alb}, Example 3). Other   interesting   examples can be obtained from    \textit{random hyperbolic dynamic systems}  (see  \cite{Gundlach}, \cite{Arnold}) or from \textit{hyperbolic linear cocycle linear} (see \cite{Jeo2}, \cite{luisb}).  
We will finish this section by presenting a notion of stable and unstable sets which works for families of diffeomorphisms (see Definition \ref{conjuntosestaviesfam}).   The stable (unstable) set at a point $p \in M_{i}$ consists of the  points $q\in M_{i}$  whose  (negative) positive orbit   approach exponentially to the  (negative) positive orbit   of $p$.
    
\medskip
 
In   Theorem \ref{primerpropovariedade} we will show    a  generalized version of Hadamard-Perron  Theorem to obtain   admissible manifolds    (see \cite{luisb}, \cite{Katok}). In our case,   stable and unstable subspaces of an Anosov family are not necessarily orthogonal. Additionally, the size of the submanifolds to be obtained  at a given point in the total space could decay along the orbit of such point.   These admissible manifolds  do not necessarily coincide with the stable or unstable subsets of a sequence of diffeomorphisms.

\medskip

 We will finish this work in the Section 4 
   with   Theorems \ref{variedadeinstave} and \ref{variedadeestavel}, the unstable and stable manifold Theorems for Anosov family. In these theorems    
     we give conditions with which the submanifolds obtained in   Section 3 coincide with the stable and unstable subsets  for an Anosov family, showing the uniqueness of the manifolds.       The results to be given here can be adapted to obtain stable and unstable manifolds for single hyperbolic maps,    non-uniform hyperbolic dynamical systems, random hyperbolic dynamical systems, among others systems (see \cite{luisb}, \cite{Gundlach},  \cite{Arnold}).  

\medskip

\section{Anosov Families}

 In this section, in addition to introduce the definition of Anosov family, we will   give 
some examples. Indeed, given a sequence of  Riemannian  manifolds  $M_{i}$, with fixed Riemannian metrics $\langle \cdot, \cdot\rangle_{i}$  for $i\in \mathbb{Z}$, consider the  \textit{disjoint union}  $$\textbf{M}=\coprod_{i\in \mathbb{Z}}{M_{i}}=\bigcup_{i\in \mathbb{Z}}{M_{i}\times{i}}.$$     
The set $\textbf{M}$ will be called   \textit{total space} and the $M_{i}$ will be called  \textit{components}.  We give the total space      the Riemannian metric   $\langle \cdot, \cdot\rangle$      defined as 
\( 
\langle \cdot, \cdot\rangle|_{M_{i}}=\langle \cdot, \cdot\rangle_{i}\),    for \(i\in \mathbb{Z}.\)   
We denote by $\Vert \cdot\Vert_{i}$ the  norm induced by  $\langle\cdot,\cdot\rangle_{i}$ on $TM_{i}$ and we will take   $\Vert \cdot \Vert$ defined on  $\textbf{M}$  as    $\Vert \cdot\Vert|_{M_{i}}=\Vert \cdot\Vert_{i} $ for $i\in \mathbb{Z}$.

\begin{defin}\label{leidecomposicao} A  \textit{non-stationary dynamical system} (or \textit{n.s.d.s.}) $(\textbf{M},\langle\cdot,\cdot\rangle, \textbf{\textit{f}})$  is an application $\textbf{\textit{f}}:\textbf{M}\rightarrow \textbf{M}$, such that, for each $i\in\mathbb{Z}$, $\textbf{\textit{f}}|_{M_{i}}=f_{i}:M_{i}\rightarrow M_{i+1}$ is a  $C^{1}$-diffeomorphism. Sometimes we use the notation   $\textbf{\textit{f}}=(f_{i})_{i\in\mathbb{Z}}$. A $n$-th composition is defined, for $i\in \mathbb{Z}$,   as   
\begin{equation*}
   \textbf{\textit{f}}_{ i} ^{n}:= \begin{cases}
  f_{i+n-1}\circ \cdots\circ f_{i}:M_{i}\rightarrow M_{i+n} & \mbox{if }n>0 \\
  f_{i-n}^{-1}\circ \cdots\circ f_{i-1}^{-1}:M_{i}\rightarrow M_{i-n} & \mbox{if }n<0 \\
	I_{i}:M_{i}\rightarrow M_{i} & \mbox{if }n=0,
        \end{cases}
\end{equation*}
\end{defin}

 This concept is known as \textit{sequences of mappings}, \textit{family of diffeomorphisms} or \textit{non-autono\-mous dynamical systems} (see   \cite{Jeo1}, \cite{Jeo2}, \cite{Jeo3}, \cite{alb}, \cite{Mikko}, and references there).   It can be   built a topological entropy for these systems. In \cite{Jeo1} we prove the continuity of this entropy.

\medskip

Since  $f_{i}$ is a   diffeomorphism,   the components $M_{i}$ are diffeomorphic  Riemannian manifolds. These  components could be, for instances, the same manifold with   Riemannian metrics $\langle\cdot,\cdot\rangle_{i}$ changing  with $i$ (see Figure \ref{Toros2}), or the  $M_{i}$'s could  be the same surface with    different \textit{fractal structures},  or with \textit{Thurston corrugations}, etc. (see   
 \cite{Borrelli2}).

  \begin{figure}[ht]
\begin{center}
\begin{tikzpicture}
\tikzstyle{point}=[circle,thick,draw=black,fill=black,inner sep=0pt,minimum width=1pt,minimum height=1pt]
\newcommand*{\xMin}{0}%
\newcommand*{\xMax}{6}%
\newcommand*{\yMin}{0}%
\newcommand*{\yMax}{6}%

\draw (-6.3,0.2) node[below] {\small \dots};
  
\draw (-6,0) .. controls (-5.9,0.8) and (-3.1,0.8) .. (-3,0);
\draw (-5.86,-0.01) .. controls (-5.85,0.6) and (-3.15,0.6) .. (-3.14,-0.01);
\draw (-6,0) .. controls (-5.9,-0.8) and (-3.1,-0.8) .. (-3,0);
\draw (-5.88,0.05) .. controls (-5.85,-0.6) and (-3.15,-0.6) .. (-3.12,0.05);
\draw (-4.5, -0.9) node[below] {\small $M_{i-1}$};

\draw (-2.5,0.45) node[below] {\small $\xrightarrow{f_{i-1}}$};

\draw (-2,0) .. controls (-2.05,0.8) and (0.1,0.8) .. (0,0);
\draw (-2,0) .. controls (-2.05,-0.8) and (0.1,-0.8) .. (0,0);
\draw (-1.5,-0.029) .. controls (-1.31,0.2) and (-0.7,0.2) .. (-0.5,-0.03);
\draw (-1.55,0.05) .. controls (-1.45,-0.2) and (-0.55,-0.2) .. (-0.45,0.05);

\draw (-1, -0.9) node[below] {\small $M_{i}$};

\draw (0.5,0.45) node[below] {\small  $\xrightarrow{\,\, \, f_{i}\, \, \,}$};

 \draw (1,0) .. controls (1,-0.7) and (3.5,-1.5) .. (3.5,0);
 \draw (1.2,0.1) .. controls (1,-0.3) and (3,-0.1) .. (3,0.4);

 \draw (1,0) .. controls (1,0.7) and (3.5,1.5) .. (3.5,0);
 \draw (1.2,0) .. controls (1.1,0.3) and (3,0.5) .. (2.8,0.15);
\draw (2.5, -0.9) node[below] {\small $M_{i+1}$};

 \draw (3.9,0.2) node[below] {\small \dots};
\end{tikzpicture}
\end{center}
\caption{A n.s.d.s. on a sequence of 2-torus endowed with different Riemannian metrics.}
\label{Toros2}
\end{figure}
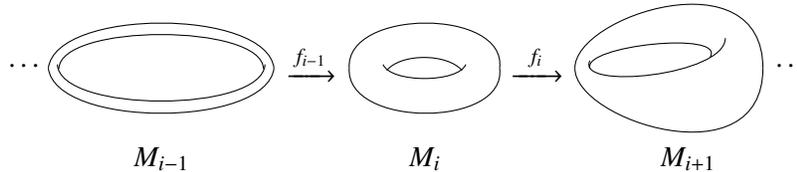

\begin{defin}\label{anosovfamily}  A n.s.d.s.     $(\textbf{M},\langle\cdot,\cdot\rangle, \textbf{\textit{f}})$  is called an \textit{Anosov family} if:
\begin{enumerate}[\upshape (i)]
\item the tangent bundle $T\textbf{M}$ has a continuous splitting   $E^{s}\oplus E^{u}$ which is  $D\textbf{\textit{f}}$-\textit{invariant}, i. e., for each $p\in \textbf{M}$, 
 $T_{p}\textbf{M}=E^{s}_{p}\oplus E^{u}_{p}$ with $D \textbf{\textit{f}}_{p}(E^{s}_{p})= E^{s}_{\textbf{\textit{f}}(p)}$ and $D \textbf{\textit{f}}_{p}(E^{u}_{p})= E^{u}_{\textbf{\textit{f}}(p)}$, where $T_{p}\textbf{M} $ is the    tangent space at $p;$
\item there exist constants $\lambda \in (0,1)$ and $c>0$ such that for each  $i\in \mathbb{Z}$, $n\geq 1$,    and $p\in M_{i}$, 
we have: \[\Vert D (\textbf{\textit{f}}_{i}^{n})_{p}(v)\Vert \leq c\lambda^{n}\Vert v\Vert \text{ if   }v\in E_{p}^{s}\quad\text{and}\quad \Vert D (\textbf{\textit{f}}_{i}^{-n})_{p} (v)\Vert \leq c\lambda^{n}\Vert v\Vert  \text{ if }v\in E_{p}^{u}.\]
\end{enumerate}
The  subspaces $E^{s}_{p}$ and $E^{u}_{p}$ are called     stable and unstable subspaces, respectively. If we can take
$c=1$ we say the family is \textit{strictly Anosov}. See Figure \ref{Anosovdifeo}.
\end{defin}
   
\begin{figure}[ht] 
\begin{center}

\begin{tikzpicture}
\draw[black!7,fill=black!7, ultra thin] (-7.9,-0.4)rectangle (-4.1,3.4);
\draw[black!7,fill=black!7, ultra thin] (-2.4,-0.4)rectangle (1.4,3.4);
\draw[black!7,fill=black!7, ultra thin] (3.1,-0.4)rectangle (6.9,3.4);

\draw[black,fill=black!27, very thin] (-7.5,1)rectangle (-4.5,2);
 \draw[<->] (-6,-0.5) -- (-6,3.5);
 \draw[<->] (-8,1.5) -- (-4,1.5); 
\draw (-6.2,4) node[below] {\quad{\small $E_{q}^{u}$}}; 
\draw (-3.9,1.7) node[below] {\quad{\small $E_{q}^{s}$}};
\draw (-4.6,3.3) node[below] {\small $T_{q}M$};
\draw (-3.2,2.3) node[below] {\small $D(\textbf{\textit{f}})_{q}$};
 \draw[->] (-4.4,1.8) -- (-2,1.8);
 \draw (-7.7,2.5) node[below] {\quad{\small $A$}};
 
\draw[black,fill=black!27, very thin] (-1.5,0.5)rectangle (0.5,2.5);
 \draw[<->] (-0.5,-0.5) -- (-0.5,3.5);
 \draw[<->] (-2.5,1.5) -- (1.5,1.5);
\draw (-0.7,4) node[below] {\quad{\small $E_{p}^{u}$}};
\draw (1.6,1.7) node[below] {\quad{\small $E_{p}^{s}$}};
\draw (0.7,3.3) node[below] {\quad{\small $T_{p}M$}};
\draw (2.1,2.3) node[below] {\quad{\small $D(\textbf{\textit{f}})_{p}$}};
\draw[->] (1.0,1.8) -- (3.5,1.8);
\draw (-1.7,3) node[below] {\quad{\small $B$}};

\draw[black,fill=black!27, very thin] (4.5,0.0)rectangle (5.5,3);
 \draw[<->] (5,-0.5) -- (5,3.5);
 \draw[<->] (3,1.5) -- (7,1.5);
 \draw (5,4) node[below] {\small $E_{z}^{u}$};
 \draw (7.3,1.7) node[below] {\small $E_{z}^{s}$};
 \draw (6.2,3.3) node[below] {\quad{\small $T_{z}M$}};
 \draw (4.3,3.5) node[below] {\quad{\small $C$}};
\end{tikzpicture} 

\end{center}
\caption{$q=\textbf{\textit{f}}^{-1}(p)$ and $r=\textbf{\textit{f}}(p)$. $D(\textbf{\textit{f}})_{q}(A)=B$ and $D(\textbf{\textit{f}})_{p}(B)=C$}
\label{Anosovdifeo}
\end{figure}

In \cite{Jeo2} we proved the set consisting of Anosov families is open in the set consisting of n.s.d.s. on \textbf{M}, endowed with the Whitney topology (or strong topology).  The structural stability of certain families is studied in \cite{Jeo3}. 
 
\medskip 

  The splitting $T\textbf{M}=E^{s}\oplus E^{u}$ induced by an Anosov family is unique  (see \cite{alb}, Proposition 2.12). Actually, in \cite{Jeo2},  we prove    for each $p\in M_{i}$,     \begin{align*}&E_{p} ^{s}=\{v\in T_{p}M_{i}: ( \Vert D (
\textbf{\textit{f}}^{n}_{i})_{p}  (v)\Vert)_{n\geq1}\text{ is bounded }\} \\
 \text{and }\quad\quad&E_{p} ^{u}=\{v\in T_{p}M_{i}: (\Vert D (\textbf{\textit{f}}^{-n}_{i}) _{p} (v)\Vert)_{n\geq1}\text{ is bounded }\}.
 \end{align*}  

It is clear that, if $M$ is a compact Riemannian manifold with Riemannian metric $\langle\cdot,\cdot\rangle$, $M_{i}=M\times\{i\}$ endowed with the metric $\langle\cdot,\cdot\rangle_{i}= \langle\cdot,\cdot\rangle $,   and $f_{i}$ is an Anosov diffeomorphism on $M$, for $i\in \mathbb{Z}$,   then $(f_{i})_{i\in\mathbb{Z}}$  is an Anosov family.  
 
 \medskip 

The notion of Anosov diffeomorphism on a compact Riemannian manifold does not depend on the
Riemannian metric (see \cite{Katok}). In the case of n.s.d.s., by suitably changing the metric
 $\langle\cdot,\cdot\rangle_{i}$ on each $M_{i}$, the \textit{constant family   associated} to the identity   could become  
an Anosov family (see \cite{alb}, Example 4). 
Hence, it is important to keep fixed the metrics on each  $M_{i}$.

  \medskip

A homeomorphism $\psi:X\rightarrow X$  on the metric space $(X,d)$ is \textit{expansive} in
a  subset $Y$ of $X$  if there is $\varepsilon>0$  such that for each $y\in Y$, $x\in X$, with $x\neq y$, there exists $n\in\mathbb{Z}$ such that  $d(\psi^{n}(x),\psi^{n}(y)) >\varepsilon.$  
It is well known that if  $\Lambda\subseteq M$ is a compact hyperbolic subset for a $C^{1}$-diffeomorphism
 $\phi:M\rightarrow M$, then $\phi$ is expansive on $\Lambda.$ In the following example we will see that there are
Anosov families that are not expansive.

\begin{ej}\label{contraexemplo} Let  $M$ be a Riemannian manifold with Riemannian norm $\Vert\cdot\Vert$ and $\phi:M\rightarrow M$   an
Anosov diffeomorphism with constants $c\geq1$ and $\lambda\in(0,1).$ Take $M_{i}=M$ for all $i$ with
  Riemannian norm defined as    
\begin{equation}\label{metricatotal234}
\Vert (v_{s},v_{u})\Vert_{i}=    \begin{cases}       
        \sqrt{a^{2i}\Vert v_{s}\Vert^{2}+b^{2i}\Vert v_{u}\Vert^{2}}& \mbox{if }i\geq0 \\
        	\Vert (v_{s},v_{u})\Vert& \mbox{if }i<0, \\
        \end{cases}
\end{equation}
where  $a,b\in(\lambda,1/\lambda) $. Consider  $\textbf{M}$ as the disjoint union of the $M_{i}$ with the norm \eqref{metricatotal234}. Let $f_{i}:M_{i}\rightarrow M_{i+1}$ defined as  $f_{i}(x,i)=(\phi(x),i+1)$ for $x\in M$, $i\in\mathbb{Z}$.  It is not difficult to prove  that $\textbf{\textit{f}}=(f_{i})_{i\in\mathbb{Z}}$ is an   Anosov family  on $\textbf{M}$ with  constants $\tilde{c}=c$ and $\tilde{\lambda}=\max\{\lambda, a\lambda,\lambda/b\}<1$, where  the  splitting of    $T\textbf{M}$ is the same induced  by   $\phi$. Notice that if  $a,b\in (\lambda, 1)$, then, for $x,y\in M_0$ we obtain $d(\textbf{\textit{f}}^{n}(x),\textbf{\textit{f}}^{n}(y))\rightarrow 0$ as   $n\rightarrow +\infty$\footnote{Notice that the volume of each $M_{i}$ with the Riemannian metric $\Vert \cdot\Vert_{i}$ defined in \eqref{metricatotal234} is  decreasing, for $i\geq1$, if $a,b\in (\lambda, 1)$ (see Figure \ref{torodecre}).}.   \begin{figure}[ht]
\begin{center}
\begin{tikzpicture}
\tikzstyle{point}=[circle,thick,draw=black,fill=black,inner sep=0pt,minimum width=1pt,minimum height=1pt]
\newcommand*{\xMin}{0}%
\newcommand*{\xMax}{6}%
\newcommand*{\yMin}{0}%
\newcommand*{\yMax}{6}%

\draw (-6.5,0.2) node[below] {\small \dots};
  \draw (-6,0) .. controls (-6.05,1.1) and (-2.95,1.1) .. (-3,0);
\draw (-6,0) .. controls (-6.05,-1.1) and (-2.95,-1.1) .. (-3,0);
\draw (-5.3,-0.01) .. controls (-5.1,0.33) and (-3.9,0.33) .. (-3.7,-0.01);
 \draw (-5.34,0.05) .. controls (-5,-0.33) and (-3.92,-0.33) .. (-3.66,0.05);
\draw (-4.5, -0.8) node[below] {\small $M_{1}$};
\draw (-2,0) .. controls (-2.05,0.8) and (0.05,0.8) .. (0,0);
\draw (-2,0) .. controls (-2.05,-0.8) and (0.05,-0.8) .. (0,0);
\draw (-1.5,-0.029) .. controls (-1.31,0.2) and (-0.7,0.2) .. (-0.5,-0.03);
\draw (-1.55,0.05) .. controls (-1.45,-0.19) and (-0.55,-0.19) .. (-0.45,0.05);
\draw (-1, -0.8) node[below] {\small $M_{2}$};
 \draw (1,0) .. controls (1,-0.6) and (2.5,-0.6) .. (2.5,0);
 \draw (1,0) .. controls (1,0.6) and (2.5,0.6) .. (2.5,0);
 \draw (1.35,0) .. controls (1.5,0.18) and (2,0.18) .. (2.15,0);
 \draw (1.3,0.03) .. controls (1.5,-0.18) and (2,-0.18) .. (2.2,0.03);
\draw (1.8, -0.8) node[below] {\small $M_{3}$};
\draw (3,0.2) node[below] {\small \dots};
\end{tikzpicture}
\end{center}
\caption{$M_{1}, M_{2}, M_{3}$,\dots,   endowed with the metric given in \eqref{metricatotal234},   for $a,b\in (\lambda,1)$.} \label{torodecre}
\end{figure}
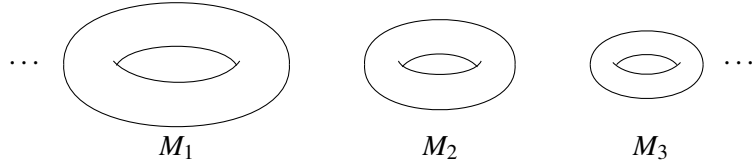

  On the other hand, if  $y$ belongs to the unstable submanifold of $\phi$ at $x$,   we obtain   $d(\textbf{\textit{f}} ^{-n}(x),\textbf{\textit{f}} ^{-n}(y))=d(\phi^{-n}(x),\phi^{-n}(y))\rightarrow 0$ as $n\rightarrow +\infty$. 
Consequently, \textbf{\textit{f}}  is not  expansive.
 \end{ej}

For each $i\in \mathbb{Z}$, let \begin{equation}\label{vainadelosangulos}\theta_{i}=\min_{p\in M_{i}} \{\theta_{p}:\theta_{p}\text{ is the angle between }E_{p}^{s}\text{ and }E_{p}^{u}\}
.\end{equation} 
The $\theta_{i}$'s  are uniformly
bounded away  from zero on each component (see \cite{Jeo2}).  
  We say that $(\textbf{M},\langle\cdot,\cdot\rangle,\textbf{\textit{f}})$ satisfies the  \textit{property of the angles} if there exists $ \mu\in(0,1)$  such that, for any $i\in\mathbb{Z}$,  we have \(\text{cos}(\theta_{i})\in [0,\mu].
\)
The following example  shows that there exist Anosov families that do not satisfy the property of the angles.

\begin{ej}\label{parelelizavelangulos} Let $M=\mathbb{T}^{2}$ and $\phi:M\rightarrow M$ be the   Anosov diffeomorphism induced by the  matrix \begin{equation*} A =
\left(
\begin{array}{ccc}
2 & 1 \\
1 & 1
 \end{array}
\right) .
\end{equation*}  
  The eigenvalues of $A$ are $\lambda=(3+\sqrt{5} )/2>1$ and $1/\lambda$.  
Consider  the  eigenvectors    $v_{s}=((1+\sqrt{5})/2,1)$ and $v_{u}=((1-\sqrt{5})/2,1)$    of $A$ associated to  $\lambda$ and $1/\lambda$, respectively.  
Let $(\zeta_{i})_{i\in\mathbb{Z}}$ be a sequence in  $[0,1).$ In the basis $\{v_{s},v_{u}\}$ of $\mathbb{R}^{2}$, set \begin{equation*} B_{i} =
\left(
\begin{array}{ccc}
1 & \zeta_{i} \\
\zeta_{i} & 1
 \end{array}
\right) \quad i\in\mathbb{Z}.
\end{equation*} 
The eigenvalues  of $B_{i}$ are $\alpha_{i}=1+\zeta_{i}$ and $\beta_{i}=1-\zeta_{i}.$ Since $\zeta_{i}\in [0,1)$, the matrix $B_{i}$ is positive definite. Thus, it induces an inner product    $\langle\cdot,\cdot\rangle_{i}$ on $\mathbb{R}^{2}$: if $v_{1}=av_{s}+bv_{u}$, $v_{1}=cv_{s}+dv_{u}\in  \mathbb{R}^{2}$, 
\begin{equation*} \langle v_{1},v_{2}\rangle_{i} =
 \left(
\begin{array}{ccc}
a & b
 \end{array}
\right) \left(
\begin{array}{ccc}
1 & \zeta_{i} \\
\zeta_{i} & 1
 \end{array}
\right) \left(
\begin{array}{ccc}
c  \\
d 
 \end{array}
\right) \quad i\in\mathbb{Z}.
\end{equation*} 
Notice that the angle between   $v_{s}$ and $v_{u}$ with the inner product $ \langle \cdot,\cdot\rangle_{i}$ is: 
\[\theta_{i}=\arccos\left(\frac{\langle v_{1},v_{2}\rangle_{i}}{\sqrt{\langle v_{1},v_{1}\rangle_{i}\cdot \langle v_{2},v_{2}\rangle_{i}}}\right)=\zeta_{i}.\] 
Furthermore, if $\Vert\cdot\Vert_{i}$  is the norm    induced by $ \langle \cdot,\cdot\rangle_{i}$  and $\Vert\cdot\Vert$ is the canonical  norm  of $\mathbb{R}^{2}$, we have $\Vert v_{s}\Vert_{i}=\Vert v_{s}\Vert$ and $\Vert v_{u}\Vert_{i}=\Vert v_{u}\Vert$ for all $i\in\mathbb{Z}$ (the inner product $ \langle \cdot,\cdot\rangle_{i}$ only change   the angles between $v_{s}$ and $v_{u}$). 
Consequently, $(\textbf{M},\langle\cdot,\cdot\rangle  ,\textbf{\textit{f}}) $ is an Anosov family, where  \textbf{M} is the disjoint union of the $M_{i}$, $\langle\cdot,\cdot\rangle $ is obtained by $\langle\cdot,\cdot\rangle_{i}  $ and $f_{i}(x,i)=(\phi(x),i+1)$ for $x\in M$, $i\in\mathbb{Z}$. If $\zeta_{i}\rightarrow 0$ as $i\rightarrow \infty$, then $(\textbf{M},\langle\cdot,\cdot\rangle  ,\textbf{\textit{f}}) $ is an Anosov family that does not satisfy the property of the angles.
\end{ej}

In \cite{Jeo2},   we show that there exists a Riemannian metric $\langle\cdot,\cdot\rangle_{\ast}$ on
 \textbf{M}, equivalent to $\langle\cdot,\cdot\rangle$ on each $M_{i}$, such that $(\textbf{M},\langle\cdot,\cdot\rangle_{\ast}, \textbf{\textit{f}})$ is a   strictly
Anosov family and satisfies the property of the angles. In the case of an Anosov  diffeomorphism
on  a compact Riemannian manifold the previous fact is known as \textit{Lemma of
Mather} and, by compactness, the metric $\langle\cdot,\cdot\rangle^{\ast}$ is uniformly equivalent to   $\langle\cdot,\cdot\rangle$.
In the case of families, the metric $\langle\cdot,\cdot\rangle_{\ast}$  is    uniformly equivalent to
$\langle\cdot,\cdot\rangle$ on \textbf{M} if and only if   $(\textbf{M},\langle\cdot,\cdot\rangle,\textbf{\textit{f}})$  satisfies the property of the angles (see \eqref{cosadeequiv}).

\medskip

 The stable  and unstable sets  for n.s.d.s. to be considered here  consist of the  points   whose   orbits   approach exponentially to the   orbit    of a given point.      Let $ d_ {i} (\cdot, \cdot) $  be the   Riemannian metric induced by $ \langle \cdot, \cdot \rangle_{i} $ on $M_{i}$.  To simplify notation,  we will use     $d (\cdot,\cdot)$ to denote that metric.  
   
   \begin{defin}\label{defindecai} Given two points $p,q\in \textbf{M}$, set
\begin{equation*} 
\Theta_{p,q}= \underset{n\rightarrow \infty}\limsup  \frac{1}{n}\log d(\textbf{\textit{f}}_{i}^{n}(q),\textbf{\textit{f}}_{i}^{n}(p))\quad \text{and}\quad \Omega_{p,q} =\underset{n\rightarrow \infty}\limsup\frac{1}{n}\log d(\textbf{\textit{f}}_{i}^{-n}(q),\textbf{\textit{f}}_{i}^{-n}(p)).
\end{equation*}
   \end{defin}
     
\begin{defin}\label{conjuntosestaviesfam} Let   $\varepsilon=(\varepsilon_{i})_{i\in\mathbb{Z}}$ be a sequence of positive numbers.   Fix  $p\in M_{i}$.   Let  $B(p,\delta )$ be the ball with center $p$ and radius $\delta>0$. Set 
\begin{enumerate}[\upshape (i)]
\item  \(\mathcal{N}^{s}(p,\varepsilon )=\{q\in M_{i}:
 \textbf{\textit{f}}_{i}^{n}(q)\in B(\textbf{\textit{f}}_{i}^{n}(p),\varepsilon_{i+n})\text{ for }n\geq0\text{ and }\Theta_{p,q} <0\}\):=  \textit{the local stable set at} $p$; 
\item  \(\mathcal{N}^{u}(p,\varepsilon)=\{q\in M_{i}:  
\textbf{\textit{f}}_{i}^{-n}(q)\in B(\textbf{\textit{f}}_{i}^{-n}(p),\varepsilon_{i-n})\text{ for  }n\geq1 \text{ and  
} \Omega_{p,q}<0\}\):=   \textit{the local unstable set at} $p$.
\end{enumerate}
\end{defin}

 In the Section 4, we will give conditions with which the local stable and unstable sets for Anosov families are submanifolds differentiable tangent  to stable  and unstable subspaces (see Theorems \ref{variedadeinstave} and \ref{variedadeestavel}). 

\medskip

 The existence of Anosov diffeomorphisms $\phi:M\rightarrow M$ imposes strong restrictions on the manifold $M$.   All known examples of Anosov diffeomorphisms are defined on \textit{infranilmanifolds} (see      
 \cite{luisb},   \cite{Katok}). If $M$ is a parallelizable Riemannian
  manifold, suitably changing the metrics on each component $M_{i}=M\times \{i\}$  
we can obtain an Anosov family on  $\textbf{M}$, taking  $f_{i}$ as
the identity $I_{i}:M_{i}\rightarrow M_{i+1}$ (see \cite{alb}, \cite{Jeo2}). An  Anosov family    does not   necessarily consist of Anosov diffeomorphisms.    A natural question that arises from the above is:  Let $M$ be a  parallelizable Riemannian manifold, with  Riemannian metric $\langle\cdot,\cdot\rangle.$  Take  $M_{i}=M\times \{i\}$ with   Riemannian metric   $\langle\cdot,\cdot\rangle_{i}=\langle\cdot,\cdot\rangle$ for all $i\in\mathbb{Z},$ and let  \textbf{M} be the  disjoint union of the $M_{i}$'s. Is there any  Anosov family  on \textbf{M}?
 Since the constant family associated to an  Anosov difeomorphism is an Anosov  family,    each manifold   admitting an Anosov diffeomorphism admits an Anosov family. 

\medskip

It is well-known that there are not Anosov diffeomorphisms on the circle $\mathbb{S}^{1}$.  Next we prove that the circle   does not admit  Anosov families. 

\begin{propo} Set  $M_{i}= \mathbb{S}^{1}\times \{i\}$ with   Riemannian  metric inherited from $\mathbb{R}^{2}$  and \textbf{M}  disjoint  union  of the  $M_{i}.$
Thus, there is not  any Anosov family on \textbf{M}.\end{propo}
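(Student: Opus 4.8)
The plan is to exploit that $\mathbb{S}^{1}$ is one-dimensional, which forces the hyperbolic splitting to degenerate, and then to reach a contradiction through a change-of-variables (length) estimate. First I would note that for each $i\in\mathbb{Z}$ and $p\in M_{i}$ the tangent line $T_{p}M_{i}$ has dimension one, so the relation $T_{p}M_{i}=E_{p}^{s}\oplus E_{p}^{u}$ (in particular $E_{p}^{s}\cap E_{p}^{u}=\{0\}$) forces exactly one of $E_{p}^{s},E_{p}^{u}$ to be $\{0\}$ and the other to equal $T_{p}M_{i}$. Since the splitting is continuous and $M_{i}\cong\mathbb{S}^{1}$ is connected, the ranks $\dim E_{p}^{s}$ and $\dim E_{p}^{u}$ are constant on $M_{i}$. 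Hence, fixing $i$, we are in one of two cases: either $E_{p}^{s}=T_{p}M_{i}$ for every $p\in M_{i}$, or $E_{p}^{u}=T_{p}M_{i}$ for every $p\in M_{i}$.

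In dimension one the derivative of a map is simply multiplication by a scalar, so I would write $(\textbf{\textit{f}}_{i}^{m})'(p)$ for the factor by which $\textbf{\textit{f}}_{i}^{m}\colon M_{i}\to M_{i+m}$ scales length at $p$. In the first case condition (ii) of Definition \ref{anosovfamily} reads $|(\textbf{\textit{f}}_{i}^{n})'(p)|\leq c\lambda^{n}$ for all $p\in M_{i}$ and $n\geq1$; in the second case it reads $|(\textbf{\textit{f}}_{i}^{-n})'(p)|\leq c\lambda^{n}$. Both cases are handled identically, so set $g=\textbf{\textit{f}}_{i}^{n}$ (resp.\ $g=\textbf{\textit{f}}_{i}^{-n}$); in either case $g$ is a diffeomorphism from $M_{i}$ onto another circle $M_{j}$, and $|g'(p)|\leq c\lambda^{n}$ for every $p$.

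The decisive step is a length computation. Because each $M_{i}=\mathbb{S}^{1}\times\{i\}$ carries the metric inherited from $\mathbb{R}^{2}$, all the circles have the same total length $L=2\pi$. Applying the change-of-variables formula for arc length to the diffeomorphism $g\colon M_{i}\to M_{j}$ gives
\begin{equation*}
L=\int_{M_{j}}dq=\int_{M_{i}}\bigl|g'(p)\bigr|\,dp\leq\int_{M_{i}}c\lambda^{n}\,dp=c\lambda^{n}L.
\end{equation*}
Thus $1\leq c\lambda^{n}$ for every $n\geq1$, and letting $n\to\infty$ with $\lambda\in(0,1)$ yields $1\leq0$, a contradiction. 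Therefore no Anosov family exists on \textbf{M}.

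The step I expect to be most delicate is the first paragraph, not any estimate: I must make sure that the phrase \emph{continuous splitting} genuinely forces the ranks of $E^{s}$ and $E^{u}$ to be locally constant, hence constant on the connected circle $M_{i}$, so that one of the two subbundles fills the entire tangent line uniformly in $p$. Once this degeneracy is established, the contraction hypothesis applies to all tangent vectors and the integral identity closes the argument at once; the scalar nature of the one-dimensional derivative is exactly what makes the change of variables available and renders the orientation of $g$ irrelevant, since only $|g'|$ appears.
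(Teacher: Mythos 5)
Your proposal is correct and takes essentially the same route as the paper: one-dimensionality forces one of $E^{s}_{p},E^{u}_{p}$ to fill the whole tangent line, so some composition $\textbf{\textit{f}}_{i}^{\pm n}$ contracts all tangent vectors by a factor $c\lambda^{n}<1$, which is impossible for a diffeomorphism between circles of equal length. You merely make explicit two points the paper leaves implicit, namely the continuity-plus-connectedness argument that makes the dichotomy uniform in $p$, and the arc-length change-of-variables computation underlying the paper's closing observation that such a uniformly contracting map cannot be a homeomorphism of the circle.
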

\begin{proof} Suppose that $(f_{i})_{i\in\mathbb{Z}}$ is an Anosov family on \textbf{M}. Fix $p\in M_{0}$. Since the circle is one-dimensional, then, either $\Vert  D(\textbf{\textit{f}}_{0} ^{n})_{p}(v)\Vert\leq c\lambda^{n} \Vert v\Vert$ for all $n\geq 1$, $v\in T_{p}\mathbb{S}^{1}$ or $\Vert  D(\textbf{\textit{f}}_{0} ^{-n})_{p}(v)\Vert\leq c\lambda^{n}\Vert v\Vert$ for all $n\geq 1$, $v\in T_{p}\mathbb{S}^{1}$. Without loss of generality we can assume that $\Vert  D(\textbf{\textit{f}}_{0} ^{n})_{p}(v)\Vert\leq c\lambda^{n}\Vert v\Vert$ for all $n\geq 1$, $v\in T_{p}\mathbb{S}^{1}$. Let $n\in\mathbb{N}$ be   such that    $c\lambda^{n}<1/2.$ Take $\phi:\mathbb{S}^{1}\rightarrow \mathbb{S}^{1}$ as  $\phi=f_{n-1}\circ\cdots\circ f_{0}$.  If  $p\in \mathbb{S}^{1}$,  then $\Vert  D \phi_{p}(v)\Vert\leq(1/2)\Vert v\Vert$ for all   $v\in T_{p}\mathbb{S}^{1}$. Since  $\phi $ is a homeomorphism, it is impossible. 
\end{proof}

From the previous proposition we get that if $M$ is   $\mathbb{S}^{1}$ then the answer to the above question  is 
``no''. This fact leaves another question:
  Let \textbf{M} be the disjoint union of $M_{i}=M\times \{i\}$. Does \textbf{M} admits an  
Anosov family if and only if $M$ admits an Anosov diffeomorphism?

\section{Hadamard-Perron Theorem for Anosov Families}
If $\phi:M\rightarrow M$ is a diffeomorphism  on a  Riemannian manifold   $M$, and $\Lambda\subseteq M$ is a compact hyperbolic  set of $\phi$, then  there exists $\varepsilon >0$ such that, for all $x\in \Lambda,$ the \textit{local stable set} at $x$, denoted by $W_{\varepsilon} ^{s}(x)$, 
and the \textit{local unstable set} at $x$,  denoted by $ W^{u}_{\varepsilon}(x)$, 
are differentiable submanifolds of $M$, 
tangent to the stable and unstable subspaces     at $ x$, respectively (see  \cite{Morris}).  In that case,     $\phi$ is a contraction on $W_{\varepsilon} ^{s} (x)$       (that is, there exists   $\nu \in (0,1)$  such that $d(\phi(z),\phi(y))\leq \nu d(z,y)$  for all $z,y \in W_{\varepsilon} ^{s} (x)$) and  $\phi^{-1}$ is a contraction on $W_{\varepsilon} ^{u} (x)$. Furthermore, $\phi(W_{\varepsilon} ^{s} (x))\subseteq W_{\varepsilon} ^{s} (\phi(x))$ and $\phi^{-1}(W_{\varepsilon} ^{u} (\phi(x)))\subseteq W_{\varepsilon} ^{u} (x)$ for each $x\in M$.  The facts above   are not always   valid for Anosov families, neither considering stable (unstable) sets for homeomorphisms (see Example \ref{contraexemplo}) nor considering stable (unstable) sets for n.s.d.s., because  it is not always possible to find a sequence of positive numbers $ \delta_{i} $ such that, for all $i$,    $f_{i}$  and its derivative $ Df_{i}  $, restricted to   balls of radius  $ \delta_{i} $, have the same qualitative behavior (see   \eqref{deltamcrecimiento}).  In this section we will give conditions to obtain invariant manifolds at each point of the total space, whose expansion or contraction by each $ f_i $  can be controlled (see Theorems \ref{primerpropovariedade}  and \ref{primerpropovariedade2}). This result is a generalized  version of Hadamard-Perron  Theorem (as well known as \textit{Pesin theory}) to build local stable and unstable manifold   for Anosov families (see \cite{luisb}, \cite{Katok}). In our case,   stable and unstable subspaces are not necessarily orthogonal  and the size of the manifolds to be obtained here could decrease along the orbits (see \eqref{deltamcrecimiento}). 

\medskip

 We will fix an Anosov family $(\textbf{M},\langle \cdot,\cdot\rangle,\textbf{\textit{f}})$    with constant   $\lambda\in(0,1)$ and $c\geq1$. 
   
 \begin{obs}If $c>1$,   we will consider   a gathering   of   \textbf{\textit{f}} instead of \textit{\textbf{f}}: we say that    $\tilde{\textbf{\textit{f}}}$  is a \textit{gathering  of   \textbf{\textit{f}}   with  lenght} $n\in\mathbb{N}$ if     $\tilde{\textbf{\textit{f}}}_{i}=f_{n(i+1)-1}\circ \cdots \circ f_{ni+1}\circ f_{ni}=\textbf{\textit{f}}_{ni}^{\, n}$ for each $i\in\mathbb{Z}$:  
\begin{equation*}\begin{CD}
\cdots M_{n(i-1)}@>{\tilde{f}_{i-1}=f_{ni-1}\circ \cdots  \circ f_{n(i-1)}}>> M_{ni}@>{\tilde{f}_{i}=f_{n(i+1)-1}\circ \cdots  \circ f_{ni}}>>M_{n(i+1)}   \cdots
\end{CD} 
\end{equation*}
Let $n $ be the minimum positive integer such that $c\lambda^{n}\leq\lambda$. Hence  the gathering  $\tilde{\textbf{\textit{f}}}$ with lenght $n$ is a strictly Anosov family with constant $\lambda$. Thus, considering a gathering of \textbf{\textit{f}} if necessary,   we can assume that the family is strictly Anosov.  
\end{obs}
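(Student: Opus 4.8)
The plan is to show that, once $n$ is chosen as in the statement, the gathered family $\tilde{\textbf{\textit{f}}}=(\tilde f_i)_{i\in\mathbb{Z}}$ inherits all of its hyperbolic structure directly from $\textbf{\textit{f}}$, so that verifying Definition \ref{anosovfamily} with constants $\tilde c=1$ and $\tilde\lambda=\lambda$ reduces to the single elementary inequality $c\lambda^{nm}\le\lambda^{m}$. The starting observation, to be recorded first, is the index identity
\[
\tilde{\textbf{\textit{f}}}_i^{\,m}=\textbf{\textit{f}}_{ni}^{\,nm}\quad\text{and}\quad \tilde{\textbf{\textit{f}}}_i^{\,-m}=\textbf{\textit{f}}_{ni}^{\,-nm}\qquad(m\ge 1),
\]
obtained by unwinding $\tilde f_j=\textbf{\textit{f}}_{nj}^{\,n}=f_{nj+n-1}\circ\cdots\circ f_{nj}$ and concatenating blocks: $\tilde f_{i+m-1}\circ\cdots\circ\tilde f_i$ telescopes exactly to $f_{n(i+m)-1}\circ\cdots\circ f_{ni}$, and likewise for the inverses. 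The total space of $\tilde{\textbf{\textit{f}}}$ is the disjoint union of the components $\tilde M_i=M_{ni}$ with the inherited metric, so norms on $T\tilde{\textbf{M}}$ agree with the corresponding norms on $T\textbf{M}$; nothing about the metric changes.

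First I would take the splitting for $\tilde{\textbf{\textit{f}}}$ to be the restriction of the splitting of $\textbf{\textit{f}}$ to the subsequence of components, i.e. $\tilde E^s_p=E^s_p$ and $\tilde E^u_p=E^u_p$ for $p\in\tilde M_i=M_{ni}$. Continuity of the restricted splitting is immediate, being a restriction of the continuous splitting of $T\textbf{M}$. Invariance under $D\tilde{\textbf{\textit{f}}}$ follows from the chain rule: since $D(\tilde f_i)_p=D(\textbf{\textit{f}}_{ni}^{\,n})_p$ is a composition of the maps $D(f_j)$, each of which carries $E^s$ to $E^s$ and $E^u$ to $E^u$ by the $D\textbf{\textit{f}}$-invariance of the original splitting, the composed map does the same. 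This establishes part (i) of Definition \ref{anosovfamily} for $\tilde{\textbf{\textit{f}}}$.

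The quantitative estimate (ii) is where the choice of $n$ enters, and it is the only computation of substance. Fix $p\in\tilde M_i$, $m\ge 1$, and $v\in \tilde E^s_p=E^s_p$. Using the index identity and the Anosov estimate for $\textbf{\textit{f}}$ applied with $nm$ iterates, I get $\Vert D(\tilde{\textbf{\textit{f}}}_i^{\,m})_p(v)\Vert=\Vert D(\textbf{\textit{f}}_{ni}^{\,nm})_p(v)\Vert\le c\lambda^{nm}\Vert v\Vert$. It then remains to absorb $c$ into the rate: because $c\ge 1$ and $m\ge 1$ give $c\le c^{m}$, and because $n$ was chosen so that $c\lambda^{n}\le\lambda$, I can write
\[
c\lambda^{nm}\le c^{m}\lambda^{nm}=(c\lambda^{n})^{m}\le\lambda^{m},
\]
whence $\Vert D(\tilde{\textbf{\textit{f}}}_i^{\,m})_p(v)\Vert\le\lambda^{m}\Vert v\Vert$. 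The unstable estimate is identical, using $\tilde{\textbf{\textit{f}}}_i^{\,-m}=\textbf{\textit{f}}_{ni}^{\,-nm}$ and $v\in\tilde E^u_p=E^u_p$. This yields (ii) with $\tilde c=1$ and $\tilde\lambda=\lambda$, i.e. $\tilde{\textbf{\textit{f}}}$ is strictly Anosov.

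Finally, I would note that such an $n$ always exists: since $\lambda\in(0,1)$ the sequence $c\lambda^{n}$ tends to $0$, so $c\lambda^{n}\le\lambda$ holds for all large $n$ and the minimum is well defined (and equals $1$ precisely when $c=1$). I do not expect any genuine obstacle here: the content is entirely the one-line inequality $(c\lambda^{n})^{m}\le\lambda^{m}$ together with $c\le c^{m}$, and the only place demanding care is the bookkeeping of indices in the identity $\tilde{\textbf{\textit{f}}}_i^{\,m}=\textbf{\textit{f}}_{ni}^{\,nm}$, which must also be checked for negative iterates so that the unstable bound is legitimate.
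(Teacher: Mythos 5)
Your proposal is correct; the paper states this remark without any proof, and your verification — the telescoping identity $\tilde{\textbf{\textit{f}}}_i^{\,m}=\textbf{\textit{f}}_{ni}^{\,nm}$ (also for negative iterates), the restricted splitting, and the inequality $c\lambda^{nm}\le c^{m}\lambda^{nm}=(c\lambda^{n})^{m}\le\lambda^{m}$ using $c\ge 1$ — is exactly the standard argument the remark implicitly relies on. No gaps: existence of the minimal $n$ follows from $\lambda\in(0,1)$ as you note, so your write-up simply supplies the details the paper omits.
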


    Let us fix $p\in \textbf{M}$.  
Without loss of generality, we can assume  $p\in M_{0}$ (if $p\notin M_{0}$, $q=\textbf{\textit{f}}^{n}(p)\in M_{0}$ for some  $n\in \mathbb{Z}$, then consider    $q$ instead of $p$). To simplify the notation, given $\varepsilon>0$, let $B_{n}(\varepsilon)\subseteq T_{\textbf{\textit{f}}_{0} ^{n}(p)} \textbf{M}$  be the ball with radius $\varepsilon $ and center $0$;  
  $B^{s}_{n}(\varepsilon)\subseteq E_{f^{n}(p)}^{s}  $   the ball with radius $\varepsilon $ and center $ 0$;  $B^{u}_{n}(\varepsilon) \subseteq E_{f^{n}(p)}^{u} $   the ball with radius $\varepsilon $ and center $ 0$.  
  
\medskip

 The following subspaces will be very useful    to prove  Proposition \ref{funciongrafico}.  Let    $\alpha \in(0,1)$ and  $(\gamma_{n})_{n\in\mathbb{Z}}$ be a sequence of positive  numbers. Set:
\begin{enumerate}[\upshape (i)] 
\item    $\Gamma^{u}_{n}(\alpha,\gamma_{n}) =\{\phi :B_{n}^{u}(\gamma_{n}) \rightarrow B_{n}^{s}(\gamma_{n}):  \phi \text{ is }\alpha\text{-Lipschitz and }     \phi(0)=0\}$.
\item    $\Gamma^{u} (\alpha,(\gamma_{n})_{n}) =\{\phi=(\phi_{n})_{n\in \mathbb{Z}} :  \phi_{n}\in \Gamma^{u}_{n}(\alpha,\gamma_{n})\}$.
\end{enumerate}
If $\phi=(\phi_{n})_{n\in \mathbb{Z}}, \psi=(\psi_{n})_{n\in \mathbb{Z}}\in \Gamma^{u} (\alpha,(\gamma_{n})_{n})$, define the metric \[d_{\Gamma^{u}}(\phi ,\psi)=\sup_{n\in \mathbb{Z}}\left\{\sup_{x\in B_{n}^{u}(\gamma_{n})\setminus \{0\}}\frac{\Vert \phi_{n}(x)-\psi_{n}(x)\Vert}{\Vert x\Vert}\right\}.\]
Hence, we have:
\begin{propo}  $(\Gamma^{u} (\alpha,(\gamma_{n})_{n}), d_{\Gamma^{u}})$  is a complete metric space.
\end{propo}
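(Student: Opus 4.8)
The plan is to verify that $(\Gamma^u(\alpha,(\gamma_n)_n), d_{\Gamma^u})$ is a complete metric space by first confirming it is a metric space and then proving that every Cauchy sequence converges within the space. The metric-space axioms are routine: symmetry and positivity are immediate from the definition, and the supremum-of-suprema structure poses no issue beyond checking finiteness, which follows because each $\phi_n$ and $\psi_n$ is $\alpha$-Lipschitz with $\phi_n(0)=\psi_n(0)=0$, so that $\Vert\phi_n(x)-\psi_n(x)\Vert/\Vert x\Vert \leq 2\alpha$ uniformly in $n$ and $x$; hence $d_{\Gamma^u}(\phi,\psi)\leq 2\alpha<\infty$. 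The triangle inequality is inherited from the norm on each fiber. So the bulk of the work is completeness.

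For completeness, I would take a Cauchy sequence $(\phi^{(k)})_{k\geq 1}$ in $\Gamma^u(\alpha,(\gamma_n)_n)$, where $\phi^{(k)}=(\phi^{(k)}_n)_{n\in\mathbb{Z}}$, and construct a candidate limit $\phi=(\phi_n)_n$. The key observation is that for each fixed $n$ and each fixed $x\in B_n^u(\gamma_n)\setminus\{0\}$, the Cauchy condition on $d_{\Gamma^u}$ gives
\[
\Vert \phi^{(k)}_n(x)-\phi^{(\ell)}_n(x)\Vert \leq \Vert x\Vert\, d_{\Gamma^u}(\phi^{(k)},\phi^{(\ell)}),
\]
so that $(\phi^{(k)}_n(x))_k$ is a Cauchy sequence in the complete space $E^s_{f^n(p)}$ (a finite-dimensional, hence complete, normed space); I define $\phi_n(x)$ to be its limit, and set $\phi_n(0)=0$. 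The essential point, and where the structure of the metric pays off, is that the convergence $\phi^{(k)}_n\to\phi_n$ is controlled uniformly across all $n$ and all $x$ simultaneously by the single quantity $d_{\Gamma^u}(\phi^{(k)},\phi^{(\ell)})$, which is what will let the pointwise limit assemble into a legitimate element of $\Gamma^u$.

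The remaining steps check that the limit $\phi=(\phi_n)_n$ actually lies in $\Gamma^u(\alpha,(\gamma_n)_n)$ and that $\phi^{(k)}\to\phi$ in $d_{\Gamma^u}$. For membership I must verify three things for each $n$: that $\phi_n(0)=0$ (true by construction), that $\phi_n$ is $\alpha$-Lipschitz, and that $\phi_n$ maps $B_n^u(\gamma_n)$ into the closed-or-appropriate ball $B_n^s(\gamma_n)$. The Lipschitz bound passes to the limit because each $\phi^{(k)}_n$ satisfies $\Vert\phi^{(k)}_n(x)-\phi^{(k)}_n(y)\Vert\leq\alpha\Vert x-y\Vert$, and pointwise limits preserve this inequality; the range condition similarly passes to the limit from $\Vert\phi^{(k)}_n(x)\Vert\leq\gamma_n$. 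Finally, to see $d_{\Gamma^u}(\phi^{(k)},\phi)\to 0$, I fix $\varepsilon>0$, use the Cauchy property to find $K$ with $d_{\Gamma^u}(\phi^{(k)},\phi^{(\ell)})<\varepsilon$ for $k,\ell\geq K$, and let $\ell\to\infty$ inside the quotient $\Vert\phi^{(k)}_n(x)-\phi^{(\ell)}_n(x)\Vert/\Vert x\Vert$ for each fixed $n,x$, obtaining $\Vert\phi^{(k)}_n(x)-\phi_n(x)\Vert/\Vert x\Vert\leq\varepsilon$; taking the supremum over $x$ and $n$ yields $d_{\Gamma^u}(\phi^{(k)},\phi)\leq\varepsilon$ for $k\geq K$. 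The only place demanding care — the main obstacle, if any — is the uniformity in $n$: one must make sure the limit function is constructed fiberwise for every $n\in\mathbb{Z}$ and that the single supremum-based metric simultaneously governs all fibers, so that the passage from pointwise convergence to convergence in $d_{\Gamma^u}$ is legitimate rather than requiring a nonuniform, $n$-dependent choice of $K$.
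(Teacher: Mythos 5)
Your proof is correct, and there is nothing in the paper to compare it against: the paper asserts this proposition without proof (prefaced only by ``Hence, we have''), and your argument is precisely the standard fiberwise completeness argument the paper implicitly relies on --- pointwise Cauchy in each complete fiber $E^{s}_{\textbf{\textit{f}}^{n}(p)}$, passage of the Lipschitz and normalization conditions to the limit, and the uniform-in-$n$ estimate letting $\ell\to\infty$ inside the Cauchy bound. One small refinement worth noting: if the balls $B_{n}^{s}(\gamma_{n})$ are taken open rather than closed, the inequality $\Vert\phi_{n}^{(k)}(x)\Vert\leq\gamma_{n}$ passes to the limit only as a non-strict bound, so membership of the limit is better deduced from the fact that it is $\alpha$-Lipschitz with $\phi_{n}(0)=0$ and $\alpha<1$, whence $\Vert\phi_{n}(x)\Vert\leq\alpha\Vert x\Vert<\gamma_{n}$ on $B_{n}^{u}(\gamma_{n})$ regardless of the convention.
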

For
a map   $F:X\rightarrow Y$, we will denote by $\mathcal{G}(F)$  the set $\{(F(x),x):x\in X\}$. Notice that, if $\phi\in \Gamma^{u}_{n}(\alpha,\gamma_{n}),$ then \[ \mathcal{G}(\phi)\subseteq  \{(v,w)\in B_{n}^{s}(\gamma_{n})\times B_{n}^{u}(\gamma_{n}):\Vert v\Vert \leq \alpha\Vert w\Vert \}.\] 
  
  For each $n\in\mathbb{Z}$, let $\varepsilon_{n}>0$ be such that the exponential application   \[\text{exp}_{\textbf{\textit{f}}_{0} ^{n}(p)}:B_{n}(\varepsilon_{n}) \rightarrow B(\textbf{\textit{f}}_{0} ^{n}(p),\varepsilon_{n}) \] is a diffeomorphism and \( \Vert v\Vert  =d(\text{exp}_{\textbf{\textit{f}}_{0} ^{n}(p)}(v), \textbf{\textit{f}}_{0} ^{n}(p)),\)  for all  \(v\in B_{n}(\varepsilon_{n}),
\)
 that is, $\varepsilon_{n}$ is the  \textit{injectivity radius} of $\text{exp}_{\textbf{\textit{f}}_{0} ^{n}(p)}$ at $\textbf{\textit{f}}_{0} ^{n}(p)$, which we denote by $r(\textbf{\textit{f}}_{0} ^{n}(p))$. 
 Now, take  $ \alpha = (\lambda ^{-1}-1)/2 $ and let $\delta_{n}>0$   be small  enough such that 
   \begin{equation*}\tilde{f}_{n} =\text{exp}^{-1} _{f^{n+1}(p)}\circ f_{n}\circ \text{exp}_{f^{n}(p)}:B_{n}(\delta_{n})\rightarrow B_{n+1}(\varepsilon_{n+1})
\end{equation*}  
is well defined, for each $n$. It is clear that   $\delta_{n}$ depends on both $\varepsilon_{n}$ and  $f_{n}$.  

\begin{obs} For each $n\in \mathbb{Z}$, consider  $L_{n}=\max_{p\in M_{n}}\Vert D (f_{n})_{p}\Vert$. Notice that  if, for each $n$,   $\delta_{n}\leq \min \{\epsilon_{n},\epsilon_{n+1}/\max\{L_{n},1\}\},$  then, for all $(v,w)\in B^{s}_{n}(\delta_{n}) \times B^{u}_{n}(\delta_{n})$, we have $\tilde{f}_{n}(v,w)\in B_{n+1}(\varepsilon_{n+1})$.  Consequently, if $M_{n}=M\times \{n\}$, where $M$ is a compact Riemannian manifold, $\langle\cdot,\cdot\rangle_{n}=\langle\cdot,\cdot\rangle,$ where $\langle\cdot,\cdot\rangle$ is the Riemannian metric on $M$, and  $(L_{n})_{n\in\mathbb{Z}}$ is bounded,  then we can find a  uniform  $\delta $ with which $ \tilde{f}_{n}$ is well-defined for each $n\geq0,$ that is, there exists $\delta>0$ such that, considering $\delta_{n}=\delta$ for each $n\geq0$,  $\tilde{f}_{n}$ is well-defined.
\end{obs}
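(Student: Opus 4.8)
The plan is to deduce both assertions from two elementary ingredients: the injectivity-radius normalization of the exponential maps, and the global Lipschitz bound $L_{n}$ for $f_{n}$ on the compact component $M_{n}$. For the first assertion I would unwind the definition. Fix $(v,w)\in B^{s}_{n}(\delta_{n})\times B^{u}_{n}(\delta_{n})$, identify the pair with the tangent vector $v+w\in T_{f^{n}(p)}\textbf{\textit{M}}$, and put $q=\exp_{f^{n}(p)}(v+w)$. Since $\delta_{n}\leq\varepsilon_{n}$ and $\varepsilon_{n}=r(f^{n}(p))$ is the injectivity radius, the vector $v+w$ lies in $B_{n}(\varepsilon_{n})$, so $q$ is well defined and, by the radial isometry property recorded above, $d(q,f^{n}(p))=\Vert v+w\Vert$. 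Because $\tilde{f}_{n}=\exp^{-1}_{f^{n+1}(p)}\circ f_{n}\circ\exp_{f^{n}(p)}$, the assertion $\tilde{f}_{n}(v,w)\in B_{n+1}(\varepsilon_{n+1})$ is equivalent to $f_{n}(q)\in B(f^{n+1}(p),\varepsilon_{n+1})$, which is also exactly the condition under which $\exp^{-1}_{f^{n+1}(p)}$ may legitimately be applied. Thus everything reduces to estimating $d(f_{n}(q),f^{n+1}(p))$.

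For that estimate I would use that $f_{n}(f^{n}(p))=f^{n+1}(p)$ and that $f_{n}$ is $C^{1}$ on the compact manifold $M_{n}$ with $L_{n}=\max\Vert Df_{n}\Vert$; applying the mean value inequality along a minimizing geodesic from $f^{n}(p)$ to $q$ (whose image under $f_{n}$ has length at most $L_{n}$ times the original) gives $d(f_{n}(q),f^{n+1}(p))\leq L_{n}\,d(q,f^{n}(p))=L_{n}\Vert v+w\Vert$. Bounding $\Vert v+w\Vert$ by $\delta_{n}$ (directly when the splitting is orthogonal, or after absorbing the norm-equivalence constant of $E^{s}\oplus E^{u}$ into the choice of $\delta_{n}$) and invoking $\delta_{n}\leq\varepsilon_{n+1}/\max\{L_{n},1\}$ yields $d(f_{n}(q),f^{n+1}(p))\leq\max\{L_{n},1\}\,\delta_{n}\leq\varepsilon_{n+1}$. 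The factor $\max\{L_{n},1\}$ is precisely what simultaneously covers the expanding case $L_{n}\geq1$, where the output must be contracted by $L_{n}$, and the contracting case $L_{n}<1$, where the requirement $\delta_{n}\leq\varepsilon_{n+1}$ already suffices. This establishes the first assertion.

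For the uniform statement I would invoke compactness twice. When each $M_{n}$ is the fixed compact manifold $M$ carrying the same metric $\langle\cdot,\cdot\rangle$, all base points $f^{n}(p)$ lie in the single manifold $M$, and the injectivity radius is a continuous, strictly positive function on the compact $M$; hence $\varepsilon_{n}=r(f^{n}(p))\geq\varepsilon_{\ast}:=\inf_{x\in M}r(x)>0$ for every $n$. Combined with the hypothesis that $(L_{n})_{n\in\mathbb{Z}}$ is bounded, say $L_{n}\leq L$, the quantity $\min\{\varepsilon_{n},\varepsilon_{n+1}/\max\{L_{n},1\}\}$ stays above $\delta:=\varepsilon_{\ast}/\max\{L,1\}>0$, so the constant choice $\delta_{n}=\delta$ makes each $\tilde{f}_{n}$ well defined for all $n\geq0$ by the first part. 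The only genuinely non-routine step is this uniform lower bound on the injectivity radius, and it is exactly here that compactness and the constancy of the metric are indispensable: in the metric-varying families of Section~2 both $\varepsilon_{n}$ and $L_{n}$ may degenerate, so no uniform $\delta$ need exist, which is the shrinking-along-orbits phenomenon recorded in \eqref{deltamcrecimiento}.
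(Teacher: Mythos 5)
Your proposal is correct and is essentially the argument the paper leaves implicit (the remark is stated without proof): the mean value inequality $d(f_{n}(q),f_{n}(\textbf{\textit{f}}^{n}(p)))\leq L_{n}\,d(q,\textbf{\textit{f}}^{n}(p))$ along a minimizing geodesic, the radial isometry of $\exp$ inside the injectivity radius, and, for the uniform statement, compactness of the fixed $(M,\langle\cdot,\cdot\rangle)$ giving $\inf_{x\in M}r(x)>0$ together with $L_{n}\leq L$. One small slip: even for an orthogonal splitting one only gets $\Vert v+w\Vert\leq\sqrt{2}\,\delta_{n}$, not $\delta_{n}$ (and at most $2\delta_{n}$ in general, by the triangle inequality, with no angle dependence), but this harmless constant --- which the paper's own formulation also glosses over --- is absorbed exactly as you indicate, e.g.\ by running the argument with $\delta_{n}/2$ in place of $\delta_{n}$.
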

 
If   $(v,w)\in B_{n}(\delta_{n})$, with $v\in E^{s}$ and $w\in E^{u}$, then 
\begin{align*}\tilde{f}_{n}(v,w)=(a_{n}(v,w)+F_{n}(v) ,b_{n}(v,w) +F_{n}(w)),
\end{align*}
where   \( 
F_{n}  =D (f_{n})_{\textbf{\textit{f}}^{n}(p)}. 
\)   Notice that \begin{equation}\label{edre}  (a_{n},b_{n})=\tilde{f}_{n}-F_{n}\quad\text{and}\quad D(\tilde{f}_{n})_{x}-D(\tilde{f}_{n})_{0}    =  D(a_{n},b_{n})_{x}     \quad\text{for }x\in B_{n}(\delta_{n}).
\end{equation} 
For each $n\in\mathbb{Z}$, set \[\mu_{n}=\sup_{v\in E_{n} ^{s}}\frac{\Vert F_{n}v\Vert}{\Vert v\Vert} \quad\quad \text{ and } \quad\quad  \kappa_{n}=\sup_{v\in E_{n+1} ^{u}}\frac{\Vert F_{n}^{-1}v\Vert}{\Vert v\Vert}.\] 
It is clear that $\max\{\mu_{n},\kappa_{n} \}\leq  \lambda ,$ for all $n.$  
Set    \begin{equation*}
 \sigma_{n}  := \sigma_{n}(\delta_{n}) = \underset{x\in B_{n}(\delta_{n})}\sup\{\Vert D(a_{n})_{x}\Vert ,  \Vert D(b_{n})_{x}\Vert  \} . 
\end{equation*}  
 
  The following proposition   is shown in \cite{luisb}, Proposition 7.3.5, when   there exists   $\delta>0$ such that, considering $\delta_{n}=\delta$ for all $n$, $\sigma_{n}$ satisfies the second inequality in \eqref{deltamcrecimiento}  (notice that $\frac{\kappa^{-1} _{n}+\alpha\mu_{n}}{1+\alpha} >1$ for each $n$).  
 We have adapted that proof to obtain   a more general result, in which $\delta_{n}$ may vary with $n$ but satisfying the first condition in \eqref{deltamcrecimiento} (this fact means that $\delta_{n}$ must not decay very quickly as $n\rightarrow -\infty$). Furthermore, in our case, $\omega_{n}$, which will be defined above, could be very large (note that    $\kappa_{n}$ could be very large).    For \(\gamma\in ( \lambda^{2},1)\) and    $\tilde{\lambda}\in (\frac{1+\lambda}{2},1)$, set \[\omega_{n}  = \min\left\{  \frac{(\kappa_{n} ^{-1}-\mu _{n})\alpha}{(1+\alpha)^{2}} , \frac{(\gamma \kappa_{n} ^{-1}-\mu _{n})}{(1+\alpha)(1+\gamma)},\frac{2\lambda\tilde{\lambda}\kappa_{n}^{-1}-1- \lambda}{1+ \lambda}\right\}.\]  
 
 \begin{propo}\label{funciongrafico}   Suppose that for each $n\leq0$  we can  choose  the $\delta_{n}$'s such that 
\begin{equation}\label{deltamcrecimiento} 
    \frac{\kappa^{-1}_{n-1}+\alpha\mu_{n-1}}{1+\alpha} \delta_{n-1} \geq \delta_{n}   \quad\text{ and }\quad   
 \sigma_{n}   < \omega_{n}. 
\end{equation} 
Then, there exists a sequence of positive numbers $(\delta_{n})_{n\geq0}$ such that, for each $n\in\mathbb{Z}$, if    $\phi_{n}\in \Gamma^{u}_{n}(\alpha,\delta_{n}) $, we have that  $$\{\tilde{f}_{n}( \phi_{n}(w),w):w\in B_{n} ^{u}(\delta_{n})\} \cap B_{n+1}^{s}(\delta_{n+1}) \times B_{n+1}^{u}(\delta_{n+1})$$
is the $\mathcal{G}$ of an  application $\psi_{n+1}$ in $\Gamma^{u}_{n+1}(\alpha,\delta_{n+1}) $.  (see Figure \ref{funciongra}). \end{propo}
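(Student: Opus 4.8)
The plan is to realize $\psi_{n+1}$ by a \emph{graph transform}: I push the graph $\mathcal{G}(\phi_n)=\{(\phi_n(w),w):w\in B_n^u(\delta_n)\}$ forward by $\tilde{f}_n$ and recover its image as a graph over the unstable ball at the next point. Having reduced to the strictly Anosov case by passing to a gathering, I write, for $w\in B_n^u(\delta_n)$,
\[
\tilde{f}_n(\phi_n(w),w)=(V(w),W(w)),\quad V(w)=a_n(\phi_n(w),w)+F_n(\phi_n(w)),\quad W(w)=b_n(\phi_n(w),w)+F_n(w),
\]
using $\tilde{f}_n=F_n+(a_n,b_n)$ from \eqref{edre} and the invariance of the splitting, which makes $F_n$ block diagonal, so that $V(w)\in E_{n+1}^s$ and $W(w)\in E_{n+1}^u$. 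Since $\phi_n$ is $\alpha$-Lipschitz with $\phi_n(0)=0$, one has $\|\phi_n(w)\|\le\alpha\|w\|<\delta_n$, so $(\phi_n(w),w)\in B_n^s(\delta_n)\times B_n^u(\delta_n)$ and $\tilde{f}_n$ is defined there by the preceding Remark. The whole statement reduces to showing that $w\mapsto W(w)$ is an expanding homeomorphism of $B_n^u(\delta_n)$ onto a set containing $B_{n+1}^u(\delta_{n+1})$, after which I put $\psi_{n+1}=V\circ W^{-1}$.

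The heart of the argument is a pair of Lipschitz estimates. For $w_1,w_2\in B_n^u(\delta_n)$ the triangle inequality — which is all one needs across the possibly non-orthogonal splitting, because the relevant differences sit separately in $E^s$ and $E^u$ — gives $\|(\phi_n(w_1),w_1)-(\phi_n(w_2),w_2)\|\le(1+\alpha)\|w_1-w_2\|$. Feeding this into the bound $\sigma_n$ on $\|Da_n\|,\|Db_n\|$, together with $\|F_nv\|\ge\kappa_n^{-1}\|v\|$ on $E^u$ and $\|F_nv\|\le\mu_n\|v\|$ on $E^s$, yields
\[
\|W(w_1)-W(w_2)\|\ge\big(\kappa_n^{-1}-\sigma_n(1+\alpha)\big)\|w_1-w_2\|,\quad\|V(w_1)-V(w_2)\|\le\big(\mu_n\alpha+\sigma_n(1+\alpha)\big)\|w_1-w_2\|.
\]
The first inequality makes $W$ injective and expanding; the two together force the Lipschitz constant of $\psi_{n+1}=V\circ W^{-1}$ to be at most $(\mu_n\alpha+\sigma_n(1+\alpha))/(\kappa_n^{-1}-\sigma_n(1+\alpha))$, which is $\le\alpha$ precisely when $\sigma_n\le\alpha(\kappa_n^{-1}-\mu_n)/(1+\alpha)^2$, the first term of $\omega_n$. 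As $\sigma_n<\omega_n$, the map $\psi_{n+1}$ is $\alpha$-Lipschitz, and $\psi_{n+1}(0)=0$ since $\tilde{f}_n(0,0)=(0,0)$ gives $W(0)=V(0)=0$.

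It remains to show $W$ covers $B_{n+1}^u(\delta_{n+1})$. Given $w'$ there, I solve $W(w)=w'$ by the fixed-point map $T(w)=F_n^{-1}\big(w'-b_n(\phi_n(w),w)\big)$ on $B_n^u(\delta_n)$: it is a contraction since its constant is at most $\kappa_n\sigma_n(1+\alpha)<1$ (again from the first term of $\omega_n$), and it preserves $B_n^u(\delta_n)$ as soon as $\delta_{n+1}\le(\kappa_n^{-1}-\sigma_n(1+\alpha))\delta_n$. For $n\le-1$ this last inequality follows from the first condition in \eqref{deltamcrecimiento}, which reindexes to $\delta_{n+1}\le\frac{\kappa_n^{-1}+\alpha\mu_n}{1+\alpha}\delta_n$, because the same bound on $\sigma_n$ gives $\frac{\kappa_n^{-1}+\alpha\mu_n}{1+\alpha}\le\kappa_n^{-1}-\sigma_n(1+\alpha)$; for $n\ge0$ I build $(\delta_n)_{n\ge0}$ forward, choosing each $\delta_{n+1}$ small enough to meet this inequality and $\sigma_n<\omega_n$, which is possible because $\sigma_n(\delta_n)\to0$ as $\delta_n\to0$. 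Hence $W$ restricts to a bijection onto $B_{n+1}^u(\delta_{n+1})$, $\psi_{n+1}=V\circ W^{-1}$ is well defined there, and $\|\psi_{n+1}(w')\|\le\alpha\|w'\|<\delta_{n+1}$ keeps its graph inside the box, so that the intersection in the statement equals $\mathcal{G}(\psi_{n+1})$.

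The genuine obstacle is the covering step together with its uniformity in $n$: since $\delta_n$ may vary and $\kappa_n$ may be large, one must prevent the forward image from ever shrinking below the prescribed radius $\delta_{n+1}$, and this is exactly what the first condition in \eqref{deltamcrecimiento} secures (it forbids $\delta_n$ from decaying too fast as $n\to-\infty$). The non-orthogonality of $E^s$ and $E^u$ looks threatening but is harmless here, since every cross-direction estimate uses only the triangle inequality. Finally, the remaining two terms of $\omega_n$, involving $\gamma$ and $\tilde{\lambda}$, play no role in this single-step statement; they are held in reserve for the contraction of the graph transform on $\Gamma^u(\alpha,(\delta_n)_n)$ and for the exponential decay estimates used later.
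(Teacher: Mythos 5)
Your proof is correct and takes essentially the same route as the paper's: your map $W$ is exactly the paper's $r_{n}(w)=F_{n}w+b_{n}(\phi_{n}(w),w)$ with the same lower Lipschitz bound \eqref{rm}, your $\psi_{n+1}=V\circ W^{-1}$ is the paper's formula \eqref{fdephi}, and your $\alpha$-Lipschitz estimate via the first term of $\omega_{n}$ reproduces \eqref{psim}. The only difference is that you make explicit two details the paper merely asserts, namely the contraction/fixed-point argument for the covering inclusion $B_{n+1}^{u}(\delta_{n+1})\subseteq r_{n}(B_{n}^{u}(\delta_{n}))$ and the verification that the first condition in \eqref{deltamcrecimiento} suffices for it because $\frac{\kappa_{n}^{-1}+\alpha\mu_{n}}{1+\alpha}\leq \kappa_{n}^{-1}-\sigma_{n}(1+\alpha)$ under $\sigma_{n}<\omega_{n}$.
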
 
\begin{figure}[ht] 
\begin{center}

\begin{tikzpicture}
\draw[black!7,fill=black!10, ultra thin] (-7.9,-0.4)rectangle (-4.1,3.4);
\draw[black, fill=black!50, thin] (-6,1.5) -- (-4.1,2.2) -- (-4.1,0.9) -- cycle;
\draw[black, fill=black!50, thin] (-6,1.5) -- (-7.9,2.2) -- (-7.9,0.9) -- cycle;

\draw[black!7,fill=black!10, ultra thin] (-2.4,-0.4)rectangle (1.4,3.4);
\draw[black, fill=black!50, thin] (-0.5,1.5) -- (1.4,2.2) -- (1.4,0.9) -- cycle;
\draw[black, fill=black!50, thin] (-0.5,1.5) -- (-2.4,2.2) -- (-2.4,0.9) -- cycle;

\draw[<->] (-6,-0.5) -- (-6,3.5);
\draw[<->] (-8.2,1.5) -- (-3.8,1.5); 
\draw (-6.2,4) node[below] {\quad{\small $E_{p}^{s}$}}; 
\draw (-3.7,1.7) node[below] {\quad{\small $E_{p}^{u}$}};
\draw (-6,-0.4) node[below] {\small $B^{s}_{n}(\delta_{n})\times B^{u}_{n}(\delta_{n})$};
\draw (-3.3,2.4) node[below] {\small $\tilde{f}_{n}$};
\draw[->, very thick] (-3.8,1.8) -- (-2.7,1.8);

\draw[<->] (-0.5,-0.5) -- (-0.5,3.5);
\draw[<->] (-2.7,1.5) -- (1.7,1.5);
\draw (-0.7,4) node[below] {\quad{\small $E_{q}^{s}$}};
\draw (2,1.7) node[below] {\small $E_{q}^{u}$};
\draw (-0.5,-0.4) node[below] {\small $B^{s}_{n+1}(\delta_{n+1})\times B^{u}_{n+1}(\delta_{n+1})$};

\draw[very thick] (-7.9,1.3) .. controls (-6.4,1) and (-5.7,2) .. (-4.1,1.65);

\draw (-5.1,2.45) node[below] {\small $\mathcal{G}(\phi_{n})$};
\draw[->] (-5.1,1.95) -- (-5.1,1.75);

\draw[very thick] (-2.70,1) .. controls (-1.7,1) and (0.3,2) .. (1.6,1.65);

 \draw (0.2,2.45) node[below] {\small $\mathcal{G}(\psi_{n+1})$};
 \draw[->] (0.1,1.95) -- (0.1,1.65);
\end{tikzpicture} 
\end{center}
\caption{$\mathcal{G}(\psi_{n+1})\subseteq\tilde{f}_{n}(\mathcal{G}( \phi_{n}))$. Shaded regions represent the unstable $\alpha$-cones.} \label{funciongra}
\end{figure}
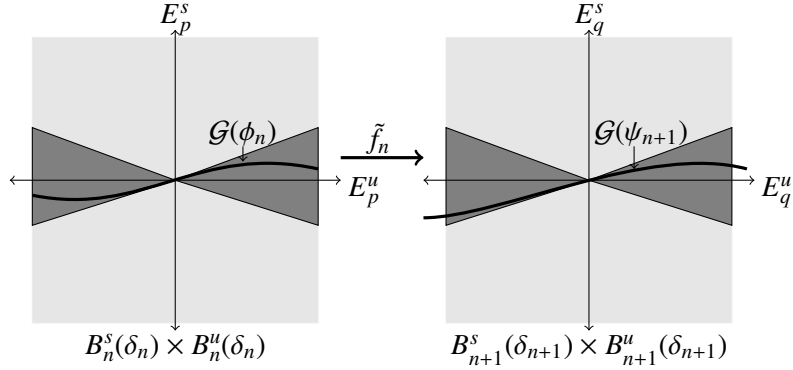
 
\begin{proof}  
Inductivelly, for each $n\geq0$ we can choose $\delta_{n}>0$ such that $\sigma_{n} <\omega_{n} $ and if    $\phi_{n-1}\in \Gamma^{u}_{n-1}(\alpha,\delta_{n-1}) $, then  \(\{\tilde{f}_{n-1}( \phi_{n-1}(w),w):w\in B_{n-1} ^{u}(\delta_{n-1})\} \cap B_{n}^{s}(\delta_{n}) \times B_{n}^{u}(\delta_{n})\)
is the $\mathcal{G}$ of an  application $\psi_{n}$ in $\Gamma^{u}_{n}(\alpha,\delta_{n}) $. 

Now, fix $n\in\mathbb{Z}$ and let   $\phi_{n}\in \Gamma^{u}_{n}(\alpha,\delta_{n}) $. For  $w\in B_{n}^{u}(\delta_{n})$, set 
\[  
r_{n}(w)=F_{n}w+b_{n}(\phi_{n}(w),w).
\]
If $w,z\in B_{n}^{u}(\delta_{n})$ we have \begin{equation}\label{rm} \Vert r_{n}(w)-r_{n}(z)\Vert \geq (\kappa_{n} ^{-1}  -\omega_{n}  (1+\alpha))\Vert w -z\Vert  
\end{equation}
and therefore $r_{n}$ is injective (notice that we have   $\kappa_{n} ^{-1}  -\omega_{n}  (1+\alpha )>0$). 
Furthermore, by  \eqref{deltamcrecimiento}  and choosing properly the $\delta_{n}$'s for $n> 0$,        we can obtain   \( B_{n}^{u}(\delta_{n+1})\subseteq r_{n}(B_{n}^{u}(\delta_{n}))\)  for each \(n\in\mathbb{Z}.
\)   
Consequently, we can  define the map  
$\psi_{n+1}:B_{n+1}^{u}(\delta_{n+1})\rightarrow E^{s} _{n+1},$ as    
\begin{equation}\label{fdephi}\psi_{n+1}(w)=F_{n}\phi_{n}(r_{n}^{-1}(w))+a_{n}(\phi_{n}(r_{n}^{-1}(w)),r_{n}^{-1}(w))\quad\text{for }w\in B_{n+1}^{u}(\delta_{n+1}).
\end{equation} 
Now, if  $x=r_{n}(w), y=r_{n}(z)\in B_{n+1}^{u}(\delta_{n+1})$, it follows from  \eqref{rm}  that
\begin{equation}\label{psim}\Vert \psi_{n+1}(x)-\psi_{n+1}(y)\Vert  \leq\frac{\alpha\mu_{n}  + \omega_{n}  (1+\alpha)}{\kappa_{n}^{-1}  -\omega_{n}  (1+\alpha)}\Vert  r_{n}(w)-r_{n}(z)\Vert \leq  \alpha\Vert  x-y\Vert
\end{equation}
(since $\omega_{n} \leq \frac{(\kappa_{n}^{-1}-\mu_{n})\alpha}{ (1+\alpha)^{2}} $, then   $\frac{\alpha\mu_{n}  + \omega_{n}  (1+\alpha)}{\kappa_{n} ^{-1}  -\omega_{n}  (1+\alpha)}\leq \alpha$). Thus,  
 $\psi_{n+1}$ is $\alpha$-Lipschitz. It is clear that  $\psi_{n+1}(0)=0$ and, since $\alpha< 1,$    from \eqref{psim} we have    $\psi_{n+1}(B_{n+1}^{u}(\delta_{n+1}))\subseteq B_{n+1}^{s}(\delta_{n+1}).$  Consequently, $\psi_{n+1}\in \Gamma^{u} _{n+1}(\alpha,\delta_{n+1}).$ 
On the other hand, if   $x=r_{n}(w)\in B_{n+1}^{u}(\delta_{n+1})$ we have 
\begin{equation}\label{psidef} (\psi_{n+1}(x),x)=(F_{n}\phi_{n}(w)+a_{n}(\phi_{n}w,w),F_{n}(w)+b_{n}(\phi_{n}w,w))= \tilde{f}_{n}(\phi_{n}w,w).
\end{equation}
Therefore, $\{\tilde{f}_{n}( \phi_{n}(w),w):w\in B_{n}^{u}(\delta_{n})\} \cap B_{n+1}^{s}(\delta_{n+1})\times B_{n+1}^{u}(\delta_{n+1})$ 
is the  $\mathcal{G}$ of $\psi_{n+1}$. This fact proves the proposition. 
\end{proof}
   
 \begin{obs}\label{classec2} Notice that   $a_{n}(0)=b_{n}(0)=D(a_{n})_{0}=D(b_{n})_{0}=0$ (see \eqref{edre}), consequently we always can choose   $\delta_{n}>0$ satisfying the second  condition in \eqref{deltamcrecimiento}.    If each $f_{n}$ is $C^{2}$ and the second derivative $p\to D ^{2}\textbf{\textit{f}}_{p} $, for $p\in\textbf{M}$, is    bounded,    then we can find a uniform  $\delta $ satisfying   \eqref{deltamcrecimiento} (see \cite{Jeo3}). On the other hand, in the random hyperbolic  dynamical system case, where all the maps $f_{i}$ are small perturbations of a fixed $C^{2}$ Anosov map $f$, we can also find a uniform $\delta$ satisfying  \eqref{deltamcrecimiento}. In our case, the $f_{i}$'s are not necessarily perturbations of a fixed map (see \cite{alb}, \cite{Jeo2}, \cite{Jeo3} for more detail). \end{obs}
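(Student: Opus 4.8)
Let me read it carefully.\textbf{Approach.}
The plan is to prove the three assertions of the remark in order, each of which reduces to controlling the size of the nonlinear remainder $(a_n,b_n)=\tilde f_n-F_n$ and its derivative on a ball of radius $\delta_n$. The central quantity is $\sigma_n(\delta_n)=\sup_{x\in B_n(\delta_n)}\{\Vert D(a_n)_x\Vert,\Vert D(b_n)_x\Vert\}$, and the task is to show $\sigma_n<\omega_n$ is achievable, first pointwise (for \emph{some} $\delta_n$), then uniformly (with $\delta_n=\delta$) under the stated hypotheses. I would first record the vanishing relations and then pass to continuity/compactness arguments for the uniform statements.

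\textbf{First assertion: the second condition in \eqref{deltamcrecimiento} is always satisfiable.}
By \eqref{edre} we have $(a_n,b_n)=\tilde f_n-F_n$, where $F_n=D(f_n)_{\textbf{\textit{f}}^n(p)}=D(\tilde f_n)_0$; hence $D(a_n,b_n)_0=D(\tilde f_n)_0-F_n=0$, and evaluating at $0$ gives $a_n(0)=b_n(0)=0$ since $\tilde f_n(0)=0$. First I would note that $\tilde f_n$ is $C^1$ (a composition of the $C^1$-map $f_n$ with exponential charts), so $x\mapsto D(a_n)_x$ and $x\mapsto D(b_n)_x$ are continuous and vanish at $x=0$. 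Given any target $\omega_n>0$ (and $\omega_n>0$ holds because $\max\{\mu_n,\kappa_n\}\le\lambda$ forces each of the three terms defining $\omega_n$ to be positive, using $\gamma>\lambda^2$ and $\tilde\lambda>(1+\lambda)/2$), continuity at $0$ yields a radius $\delta_n>0$ with $\sigma_n(\delta_n)<\omega_n$. Thus the second inequality in \eqref{deltamcrecimiento} can always be met by shrinking $\delta_n$, independently of the first (metric-decay) inequality.

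\textbf{Second assertion: a uniform $\delta$ when each $f_n$ is $C^2$ with bounded second derivative.}
Here I would upgrade the pointwise estimate to a quantitative one. When $f_n\in C^2$ and $p\mapsto D^2\textbf{\textit{f}}_p$ is bounded on $\textbf{M}$, say $\Vert D^2\textbf{\textit{f}}\Vert\le K$, the second derivatives of $\tilde f_n$ are bounded by a constant depending only on $K$ and on the (uniformly controlled) charts; since $D(a_n)_0=0$, the mean value inequality gives $\Vert D(a_n)_x\Vert\le C K\Vert x\Vert$ and likewise for $b_n$, whence $\sigma_n(\delta)\le CK\delta$ for all $n$. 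Because $\kappa_n^{-1}\ge 1$ for every $n$, the three terms in $\omega_n$ are bounded below by a positive constant $\omega_\ast$ independent of $n$; choosing $\delta<\omega_\ast/(CK)$ then forces $\sigma_n(\delta)<\omega_n$ for all $n$ simultaneously, and the metric-decay inequality can be arranged for this common $\delta$ as in the preceding remark. This is where I would cite \cite{Jeo3} for the uniform control of the charts. The random-perturbation case is then immediate: if all $f_i$ are $C^2$-close to one fixed $C^2$ Anosov map $f$, the second derivatives are uniformly bounded and the same $\delta$ works.

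\textbf{Main obstacle.}
The routine part is the vanishing of $a_n,b_n$ and their first derivatives at the origin. The genuine difficulty is the \emph{uniformity} in $n$: making one $\delta$ work for all $n$ requires both a uniform upper bound on $\sigma_n(\delta)$ (demanding boundedness of $D^2\textbf{\textit{f}}$ and uniform geometry of the exponential charts $\mathrm{exp}_{\textbf{\textit{f}}^n(p)}$, i.e.\ a uniform lower bound on the injectivity radii $r(\textbf{\textit{f}}^n(p))$) and a uniform lower bound $\omega_\ast>0$ on $\omega_n$. The latter is where the inequality $\max\{\mu_n,\kappa_n\}\le\lambda$ (equivalently the strict-Anosov normalization from the earlier remark) does the real work: without it $\kappa_n^{-1}$ could degenerate and $\omega_n$ could fail to be bounded below. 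In the general (non-uniform) case no such bound holds, which is precisely why the proposition only assumes the weaker first condition in \eqref{deltamcrecimiento}, allowing $\delta_n\to 0$.
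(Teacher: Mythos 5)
Your proposal is correct and takes essentially the same route as the paper's own justification of this remark: the vanishing of $(a_{n},b_{n})$ and of $D(a_{n},b_{n})$ at $0$ (from \eqref{edre}) plus continuity of $x\mapsto D(a_{n})_{x}$, $x\mapsto D(b_{n})_{x}$ yields the pointwise choice of $\delta_{n}$ with $\sigma_{n}<\omega_{n}$, while the $C^{2}$ bound, the mean value inequality, and a uniform positive lower bound on $\omega_{n}$ (for which the paper, like you, defers the chart-uniformity details to \cite{Jeo3}) give the uniform $\delta$, with the first inequality of \eqref{deltamcrecimiento} automatic for constant $\delta_{n}$ since $\frac{\kappa_{n}^{-1}+\alpha\mu_{n}}{1+\alpha}>1$. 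One cosmetic slip: in the uniform case you invoke ``$\kappa_{n}^{-1}\ge 1$,'' which alone does not make the middle term $\gamma\kappa_{n}^{-1}-\mu_{n}$ positive (as $\gamma\in(\lambda^{2},1)$ may be smaller than $\lambda$); you need $\kappa_{n}^{-1}\ge\lambda^{-1}$ from $\max\{\mu_{n},\kappa_{n}\}\le\lambda$, which is exactly the bound you already used correctly, and uniformly in $n$, in your first paragraph.
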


From  Proposition \ref{funciongrafico} we have  the application \begin{align*}\textsf{G}:\Gamma^{u} (\alpha,(\delta_{n})_{n}) \rightarrow \Gamma^{u} (\alpha,(\delta_{n})_{n}),\quad
(\phi_{n})_{n\in\mathbb{Z}} \mapsto(\psi_{n-1})
_{n\in\mathbb{Z}},
\end{align*}  
where $\psi_{n}$ is given in  \eqref{fdephi}, is well defined.  We can prove that  
  $\textsf{G}$ is a contraction, with contraction constant $\gamma$.
Since  $\Gamma^{u}(\alpha,(\delta_{n})_{n})$ is a    complete   metric space, by the Banach fixed-point Theorem we have there exists an unique  $\phi^{\star}\in \Gamma^{u} (\alpha,(\delta_{n})_{n})$ such that $\textsf{G}(\phi^{\star})=\phi^{\star}.$  
In other words,   for each  $n\in\mathbb{Z}$, there exists an unique  $\phi_{n}^{\star} \in \Gamma^{u} _{n}(\alpha, \delta_{n})$ such that  $\tilde{f}_{n}(\phi_{n}^{\star} w,w)=(\phi_{n+1}^{\star}r_{n}(w),r_{n}(w)),$  for all $w\in B_{n}^{u}(\delta_{n})$  (see \eqref{psidef}).
Consequently,  if \begin{equation}\label{manifold}V_{n}(\delta_{n})=\{(\phi_{n}^{\star} w,w):w\in B_{n}^{u}(\delta_{n})\},
\end{equation} we have  $V_{n+1}(\delta_{n+1})\subseteq \tilde{f}_{n}(V_{n}(\delta_{n}))$ (remember that  $B_{n+1}^{u}(\delta_{n+1})\subseteq r_{n}(B_{n}^{u}(\delta_{n}))$).

 \begin{obs}\label{difernciabilidademanifolds}   The sets $V_{n}(\delta_{n})$ are  submanifolds of $M_{n}$, because they are
  graphs of applications $\alpha$-Lipschitz. The differentiable structure of $V_{n}(\delta_{n})$ is obtained from the differentiability of $\phi_{n}^{\star}$, which   can be shown similarly as in \cite{luisb}, p. 201. Furthermore,   we have  $T_{0}V_{n}(\delta_{n})=E_{\textbf{\textit{f}}^{n}(p)} ^{u}$. \end{obs}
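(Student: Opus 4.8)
The plan is to split the statement into its purely topological part and its differentiable part, the latter being the real content. The topological part is immediate: the map $w\mapsto(\phi_n^{\star}(w),w)$ is a homeomorphism of $B_n^u(\delta_n)$ onto $V_n(\delta_n)$, with continuous inverse the restriction of the projection $\pi_u:(v,w)\mapsto w$ onto $E^u_{\textbf{\textit{f}}^n(p)}$; hence $V_n(\delta_n)$ is a topological submanifold of dimension $\dim E^u_{\textbf{\textit{f}}^n(p)}$, and composing with the diffeomorphism $\exp_{\textbf{\textit{f}}^n(p)}$ carries it onto a topological submanifold of $M_n$ described by the single chart $\pi_u\circ\exp_{\textbf{\textit{f}}^n(p)}^{-1}$. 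What must still be shown is that $\phi_n^{\star}$ is of class $C^1$, so that this chart is compatible with the differentiable structure of $M_n$; this is the step that follows \cite{luisb}, p.~201.

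For the differentiability I would run a fiber-contraction argument over the contraction $\textsf{G}$ already constructed. Let $\mathcal{L}$ be the complete metric space of sequences $\Phi=(\Phi_n)_n$ of continuous maps $\Phi_n:B_n^u(\delta_n)\to L(E^u_{\textbf{\textit{f}}^n(p)},E^s_{\textbf{\textit{f}}^n(p)})$ with $\sup_{n,w}\Vert\Phi_n(w)\Vert\leq\alpha$, equipped with the sup-norm, and on $\Gamma^u(\alpha,(\delta_n)_n)\times\mathcal{L}$ define the lift $\hat{\textsf{G}}(\phi,\Phi)=(\textsf{G}\phi,\Psi)$, where $\Psi$ is obtained by formally differentiating \eqref{fdephi} with $\Phi_n$ in the role of $D\phi_n$: with $w=r_n^{-1}(x)$ and the partials of $a_n,b_n$ evaluated at $(\phi_n(w),w)$,
\[
\Psi_{n+1}(x)=\bigl[F_n|_{E^s}\Phi_n(w)+\partial_v a_n\,\Phi_n(w)+\partial_w a_n\bigr]\bigl[F_n|_{E^u}+\partial_v b_n\,\Phi_n(w)+\partial_w b_n\bigr]^{-1},
\]
$\partial_v$ and $\partial_w$ denoting differentiation in the stable and unstable directions. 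The factor in the second bracket is invertible because it is a perturbation of $F_n|_{E^u}$ of size at most $\sigma_n(1+\alpha)<\kappa_n^{-1}$, exactly the estimate underlying \eqref{rm}.

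Two estimates, parallel to those in Proposition \ref{funciongrafico}, then need to be verified: first, that $\hat{\textsf{G}}$ maps $\Gamma^u\times\mathcal{L}$ into itself, i.e.\ $\Vert\Psi_{n+1}\Vert\leq\alpha$, which uses $\omega_n\leq(\kappa_n^{-1}-\mu_n)\alpha/(1+\alpha)^2$ just as in the derivation of \eqref{psim}; and second, that on each fiber $\{\phi\}\times\mathcal{L}$ the map $\Phi\mapsto\Psi$ is a contraction with a constant $<1$ uniform in $\phi$, the leading rate being $\mu_n\kappa_n\leq\lambda^2$ and the corrections controlled by $\sigma_n<\omega_n$. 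Since $\textsf{G}$ is already a contraction with fixed point $\phi^{\star}$ and $\hat{\textsf{G}}$ is continuous and covers $\textsf{G}$, the Hirsch--Pugh fiber-contraction theorem yields a globally attracting fixed point $(\phi^{\star},\Phi^{\star})$ of $\hat{\textsf{G}}$.

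It remains to identify $\Phi^{\star}$ with $D\phi^{\star}$. Because each $f_n$, hence $\tilde f_n$ and $a_n,b_n$, is $C^1$, and because $Dr_n$ is invertible by \eqref{rm} so that $r_n^{-1}$ is $C^1$, the transform $\textsf{G}$ preserves $C^1$ graphs, and the chain rule gives $\hat{\textsf{G}}(\phi,D\phi)=(\textsf{G}\phi,D(\textsf{G}\phi))$ for every $C^1$ element $\phi$. Starting from $\phi^0\equiv0$ (so $D\phi^0=0\in\mathcal{L}$), iteration yields $\hat{\textsf{G}}^{\,k}(\phi^0,0)=(\textsf{G}^k\phi^0,D(\textsf{G}^k\phi^0))$, whose first coordinate converges uniformly to $\phi^{\star}$ and whose second converges uniformly to $\Phi^{\star}$; the classical theorem on uniform limits of $C^1$ maps with uniformly convergent derivatives then shows $\phi_n^{\star}$ is $C^1$ with $D\phi_n^{\star}=\Phi_n^{\star}$, so $V_n(\delta_n)$ and its image under $\exp_{\textbf{\textit{f}}^n(p)}$ are differentiable submanifolds. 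For the tangent space, evaluating the fixed-point relation at $w=0$, where $a_n,b_n$ and their first derivatives vanish by Remark \ref{classec2} and $r_n(0)=0$, gives $\Phi_{n+1}^{\star}(0)=F_n|_{E^s}\Phi_n^{\star}(0)(F_n|_{E^u})^{-1}$; iterating backwards and using $\Vert F_n|_{E^s}\Vert\,\Vert(F_n|_{E^u})^{-1}\Vert=\mu_n\kappa_n\leq\lambda^2$ together with $\sup_n\Vert\Phi_n^{\star}(0)\Vert\leq\alpha$ forces $\Phi_n^{\star}(0)=0$, hence $T_0V_n(\delta_n)=\{(0,w):w\in E^u_{\textbf{\textit{f}}^n(p)}\}=E^u_{\textbf{\textit{f}}^n(p)}$. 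I expect the main obstacle to be the two fiber estimates — invariance and uniform fiber contraction of $\hat{\textsf{G}}$ — since with $\delta_n$ varying and $\kappa_n$ possibly large these must be drawn from the delicate bounds encoded in $\omega_n$ and \eqref{deltamcrecimiento} rather than from any compactness.
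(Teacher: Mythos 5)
Your proposal is correct and follows essentially the route the paper intends: the remark itself gives no argument beyond citing \cite{luisb}, p.~201, and your graph-transform/fiber-contraction proof --- lifting $\textsf{G}$ to candidate derivative sections, checking invariance and uniform fiber contraction from the same bounds $\omega_{n}\leq(\kappa_{n}^{-1}-\mu_{n})\alpha/(1+\alpha)^{2}$ and $\sigma_{n}<\omega_{n}$ that underlie \eqref{rm} and \eqref{psim}, and identifying the attracting fixed point with $D\phi^{\star}$ via the $C^{1}$-limit theorem starting from $\phi^{0}\equiv 0$ --- is precisely the standard argument that reference supplies. Your tangency computation, in which Remark \ref{classec2} reduces the fixed-point relation at $w=0$ to $\Phi_{n+1}^{\star}(0)=F_{n}|_{E^{s}}\,\Phi_{n}^{\star}(0)\,(F_{n}|_{E^{u}})^{-1}$ and backward iteration with $\mu_{n}\kappa_{n}\leq\lambda^{2}$ and $\sup_{n}\Vert\Phi_{n}^{\star}(0)\Vert\leq\alpha$ forces $\Phi_{n}^{\star}(0)=0$, correctly establishes $T_{0}V_{n}(\delta_{n})=E^{u}_{\textbf{\textit{f}}^{n}(p)}$ as asserted.
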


Since $\omega_{n}\leq\frac{2\lambda\tilde{\lambda}\kappa_{n}^{-1}-1- \lambda}{1+ \lambda}$, we can prove that     $$\tau_{n}:=\frac{1+\alpha}{\kappa_{n} ^{-1}-\omega_{n}(1+\alpha)}=\frac{1+ \lambda}{ 2 \lambda \kappa_{n}^{-1} - \omega_{n}(1+ \lambda)}<\tilde{\lambda}\quad \text{ for each }n\in\mathbb{Z}.$$  We will see   the contraction of the  submanifolds by \textbf{\textit{f}} can be controlled by $\tau_{n}$ and also depends on the  angles between the stable and unstable.   Notice that, if the angles $\theta_{n}$ decay as $n\rightarrow \pm \infty$,  the vectors in  $B^{s}_{n}(\delta_n)$  and in $B^{u}_{n}(\delta_n)$ are
ever closer. 

  \medskip
  
  Fix $\zeta \in (0,1-\lambda) $   and let $\theta_{n}$ be as in \eqref{vainadelosangulos}. In \cite{Jeo2}, Proposition 3.7,  we prove that there exists a Riemannian metric $\langle \cdot,\cdot\rangle_{\ast}$ on \textbf{M} such that $(\textbf{M},\langle \cdot,\cdot\rangle_{\ast},\textbf{\textit{f}})$ is a  strictly Anosov family with constant $ \lambda^{\prime}=\lambda+\zeta$ and the   stable and unstable subspaces are orthogonal.  Furthermore, we have that \begin{equation}\label{cosadeequiv} \Delta_{n} \Vert v\Vert_{\ast}\leq  \Vert v\Vert \leq 2\Vert v\Vert _{\ast},\quad\text{for }v\in TM_{n},
\end{equation} 
where $\Vert \cdot\Vert_{\ast}$ is the norm induced by $\langle \cdot,\cdot\rangle_{\ast}$ and $\Delta_{n}=\left(\frac{1}{1-\cos(\theta_{n})}(\frac{\lambda+\zeta}{\zeta})^{2}\right)^{-1}.$   

\medskip

 Hence, if  $w\in B_{n}^{u}(\delta_{n})$ for $n\in\mathbb{Z}$, by \eqref{rm},  \eqref{psidef} and \eqref{cosadeequiv} 
   we have
\begin{align*} \Vert \tilde{f}_{n}(\phi_{n}^{\star} w,w)\Vert &\geq \Delta_{n+1}\Vert \tilde{f}_{n}(\phi_{n}^{\star} w,w)\Vert_{\ast}\geq \Delta_{n+1} \Vert r_{n}(w)\Vert_{\ast}  \\
&\geq \frac{\Delta_{n+1}}{2}\frac{\kappa_{n}^{-1}-\omega_{n}(1+\alpha)}{1+\alpha}\Vert (\phi_{n}^{\star} w,w)\Vert.
\end{align*}
Consequently, since  $(\tilde{f}_{n})^{-1}(V_{n+1}(\delta_{n+1}))\subseteq  V_{n}(\delta_{n})$, for every $n\in\mathbb{Z}$, we have  that
\[\Vert (\tilde{f}_{n})^{-1}(\phi_{n+1}^{\star} w,w)\Vert\leq \frac{2}{\Delta_{n+1}}\tau_{n}\Vert  (\phi_{n+1}^{\star} w,w)\Vert ,\quad\text{ for } w\in B^{u}_{n+1}(\delta_{n+1}).\]
Inductively we can prove for $k \geq  0$, if 
  $w\in B^{u}_{n+1}(\delta_{n+1})$ then    \begin{equation}\label{desigualdadeexponenciales}\Vert (\tilde{f}_{n-k})^{-1}\circ\dots\circ(\tilde{f}_{n})^{-1}(\phi_{n+1}^{\star} w,w)\Vert\leq \frac{2}{\Delta_{n+1}}\tau_{n-k} \cdots\tau_{n}\Vert  (\phi_{n+1}^{\star} w,w)\Vert .
\end{equation} 

\begin{teo}\label{primerpropovariedade}  Fix   $p\in M_{0}$. Suppose   the  Anosov family $(\textbf{M},\langle\cdot,\cdot\rangle, \textbf{\textit{f}})$ admits a sequence of positive numbers $\delta=(\delta_{n})_{n\in\mathbb{Z}}$ as in Proposition \ref{funciongrafico}. Thus,  there exists a  two-sided sequence      $\{\mathcal{W}^{u}( \textbf{\textit{f}}_{0} ^{\, n}(p),\delta):n\in\mathbb{Z}\}$, where   $\mathcal{W}^{u}( \textbf{\textit{f}}_{0} ^{\, n}(p),\delta)$ is a differentiable submanifold of $M_{n}$ with size $2\delta_n$, such that  for   $n\in\mathbb{Z}$:
\begin{enumerate}[\upshape (i)]
\item $\textbf{\textit{f}}^{\, n}_{0}(p)\in \mathcal{W}^{u}( \textbf{\textit{f}}_{0} ^{\, n}(p),\delta)$ and  
  $T_{\textbf{\textit{f}}^{\, n}_{0}(p)}\mathcal{W}^{u}(\textbf{\textit{f}}^{\, n}_{0}(p),\delta)=E^{u}_{\textbf{\textit{f}}^{\, n}_{0}(p)}$,  
\item $f_{n-1} ^{-1}(\mathcal{W}^{u}(  \textbf{\textit{f}}_{0} ^{\, n}(p),\delta)) \subseteq \mathcal{W}^{u}(  \textbf{\textit{f}}_{0} ^{\, n-1}(p),\delta)$, and furthermore
\item if $q\in \mathcal{W}^{u}( p_{n+1},\delta )$, where $p_{n}=\textbf{\textit{f}}_{0}^{\, n}(p)$, and $k\geq0$ we have  
\begin{equation}\label{desig1}  d(\textbf{\textit{f}}_{n+1}^{-(k+1)}(q),\textbf{\textit{f}}_{n+1}^{-(k+1)}(p_{n+1}))\leq \frac{2}{\Delta_{n+1}}\tau_{n-k} \cdots\tau_{n}d(q,p_{n+1}). 
\end{equation}    
\end{enumerate}
\end{teo}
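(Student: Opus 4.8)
The plan is to define the unstable manifolds as the images under the exponential charts of the graphs $V_n(\delta_n)$ produced by the fixed point of the graph transform $\textsf{G}$. Writing $p_n := \textbf{\textit{f}}_0^n(p)$, for each $n \in \mathbb{Z}$ I would set
\[
\mathcal{W}^u(p_n, \delta) := \exp_{p_n}(V_n(\delta_n)),
\]
with $V_n(\delta_n)$ the graph defined in \eqref{manifold}. Since $V_n(\delta_n)$ is the graph of the $\alpha$-Lipschitz map $\phi_n^\star$ over the ball $B_n^u(\delta_n)$ of radius $\delta_n$, its image under the diffeomorphism $\exp_{p_n}$ is a differentiable submanifold of $M_n$ of size $2\delta_n$; its differentiable structure and the identity $T_0 V_n(\delta_n) = E^u_{p_n}$ are exactly the content of Remark \ref{difernciabilidademanifolds}. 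Property (i) is then immediate: $\phi_n^\star(0)=0$ gives $0 \in V_n(\delta_n)$, hence $p_n = \exp_{p_n}(0) \in \mathcal{W}^u(p_n, \delta)$; and since $D(\exp_{p_n})_0$ is the identity, $T_{p_n}\mathcal{W}^u(p_n, \delta) = T_0 V_n(\delta_n) = E^u_{p_n}$.

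For property (ii) I would transport the invariance of the graphs through the conjugacy that defines $\tilde{f}_n$. From $\tilde{f}_n = \exp^{-1}_{p_{n+1}} \circ f_n \circ \exp_{p_n}$ one gets, on the relevant balls, $f_n^{-1} \circ \exp_{p_{n+1}} = \exp_{p_n} \circ \tilde{f}_n^{-1}$; combining this with the inclusion $\tilde{f}_n^{-1}(V_{n+1}(\delta_{n+1})) \subseteq V_n(\delta_n)$ established right after Proposition \ref{funciongrafico} yields
\[
f_n^{-1}(\mathcal{W}^u(p_{n+1}, \delta)) = \exp_{p_n}\big(\tilde{f}_n^{-1}(V_{n+1}(\delta_{n+1}))\big) \subseteq \exp_{p_n}(V_n(\delta_n)) = \mathcal{W}^u(p_n, \delta).
\]
Replacing $n$ by $n-1$ gives exactly the inclusion in (ii).

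For the quantitative statement (iii) I would iterate this conjugacy backwards. If $q \in \mathcal{W}^u(p_{n+1}, \delta)$ then $q = \exp_{p_{n+1}}(\phi_{n+1}^\star w, w)$ for a unique $w \in B_{n+1}^u(\delta_{n+1})$, and applying $f_n^{-1} \circ \exp_{p_{n+1}} = \exp_{p_n} \circ \tilde{f}_n^{-1}$ successively $k+1$ times produces
\[
\textbf{\textit{f}}_{n+1}^{-(k+1)}(q) = \exp_{p_{n-k}}\big((\tilde{f}_{n-k})^{-1} \circ \cdots \circ (\tilde{f}_n)^{-1}(\phi_{n+1}^\star w, w)\big),
\]
while $\textbf{\textit{f}}_{n+1}^{-(k+1)}(p_{n+1}) = p_{n-k} = \exp_{p_{n-k}}(0)$. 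By the invariance used in (ii) each backward iterate lands in the corresponding $V_{n-j}(\delta_{n-j})$, so every vector appearing stays inside the injectivity-radius ball and the defining property $\|v\| = d(\exp_{p_{n-k}}(v), p_{n-k})$ converts norms into distances. Hence $d(\textbf{\textit{f}}_{n+1}^{-(k+1)}(q), \textbf{\textit{f}}_{n+1}^{-(k+1)}(p_{n+1}))$ equals the left-hand norm of \eqref{desigualdadeexponenciales} and $d(q, p_{n+1}) = \|(\phi_{n+1}^\star w, w)\|$ equals the norm factored out on its right-hand side, so \eqref{desigualdadeexponenciales} is precisely the estimate \eqref{desig1}.

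The only delicate point I anticipate is the domain bookkeeping in this last step: one must verify that the whole backward orbit remains in the balls on which each $\exp_{p_{n-j}}$ is a radial isometry, so that chart norms genuinely equal distances on $M_{n-j}$. This is guaranteed by combining the invariance $\tilde{f}_n^{-1}(V_{n+1}(\delta_{n+1})) \subseteq V_n(\delta_n)$ with \eqref{desigualdadeexponenciales}, since the iterates not only stay in $V_{n-j}(\delta_{n-j})$ but their norms contract; the remainder of the argument is a direct transcription of the already-established properties of the graphs $V_n(\delta_n)$.
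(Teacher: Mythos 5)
Your proposal is correct and takes essentially the same approach as the paper: you define $\mathcal{W}^{u}(\textbf{\textit{f}}_{0}^{\,n}(p),\delta)=\text{exp}_{\textbf{\textit{f}}^{n}(p)}(V_{n}(\delta_{n}))$, get (i) and (ii) from Remark \ref{difernciabilidademanifolds} and the graph invariance $\tilde{f}_{n}^{-1}(V_{n+1}(\delta_{n+1}))\subseteq V_{n}(\delta_{n})$, and obtain (iii) by combining \eqref{desigualdadeexponenciales} with the radial-isometry property of the exponential charts, exactly as the paper does. Your write-up is in fact more careful than the original, which dismisses (i) and (ii) as ``clear'' and leaves implicit the domain bookkeeping you rightly flag.
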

\begin{proof} Let $V_{n}(\delta_{n})$ be as in \eqref{manifold}  and take   $\mathcal{W} ^{u}(\textbf{\textit{f}}_{0} ^{\, n}(p),\delta) =\text{exp}_{\textbf{\textit{f}}^{n}(p)}(V_{n}(\delta_{n}))$ for each $n$.   The statements (i) and (ii) of the theorem are clear.  

For (iii);   if  $q\in \mathcal{W}^{u}( p_{n},\delta)$, for each $k\geq 1$ there exists a unique $v_{n-k+1}\in T_{\textbf{\textit{f}}_{n+1}^{-k}(p)}\textbf{M}$ such that  $\text{exp}_{\textbf{\textit{f}}_{n+1}^{-k}(p)}(v_{n-k+1})=\textbf{\textit{f}}_{n+1}^{-k}(q)$ and  $\Vert v_{n-k+1}\Vert =d(\textbf{\textit{f}}_{n+1}^{-k}(p),\textbf{\textit{f}}_{n+1}^{-k}(q)).$ 
By \eqref{desigualdadeexponenciales} and since       $\mathcal{W}^{u}( \textbf{\textit{f}}_{0} ^{\, n}(p),\delta)$ is invariant by   $\textbf{\textit{f}}$ we have \eqref{desig1}. 
\end{proof}
  Theorem \ref{primerpropovariedade} is a more generalized  version of the  Hadamard-Perron Theorem    adapted  to Anosov families for the unstable case, since the angles between the stable and unstable subspace could be arbitrarily small and, furthermore, the $\delta_{n}$'s   satisfying the condition \eqref{deltamcrecimiento} are not necessarily uniform.

\medskip

Analogously we can obtain a more generalized  version of the Hadamard-Perron Theorem adapted to Anosov families for the stable case. Indeed, consider the sequence $(\epsilon_{n})_{n\in\mathbb{Z}}$  of positive numbers small enough such that  \begin{equation*} \hat{f}_{n} =\text{exp}^{-1} _{f^{n}(p)}\circ f_{n}^{-1}\circ \text{exp}_{f^{n+1}(p)}:B_{n+1}(\epsilon_{n+1})  \rightarrow T_{f^{n}(p)}\textbf{M}
\end{equation*}  is well-defined.   
For $ (v,w)\in B_{n+1}(\epsilon_{n+1}),$ set
\begin{equation*}\hat{f}_{n}(v,w)=(c_{n}(v,w)+G_{n}(v) ,d_{n}(v,w) +G_{n}(w)),  
\end{equation*}
 where  \(
G_{n} =D(f_{n}^{-1})_{\textbf{\textit{f}}_{0}^{n+1}(p)}.\) 
Notice that, for each $n\in\mathbb{Z}$,    $$ \sup_{v\in E_{n} ^{s}}\frac{\Vert G_{n}^{-1}v\Vert}{\Vert v\Vert}=\sup_{v\in E_{n} ^{s}}\frac{\Vert F_{n} v\Vert}{\Vert v\Vert}=\mu_{n} \quad\text{ and }  \quad \sup_{v\in E_{n+1} ^{u}}\frac{\Vert G_{n}v\Vert}{\Vert v\Vert}=\sup_{v\in E_{n+1} ^{u}}\frac{\Vert F^{-1}_{n}v\Vert}{\Vert v\Vert}=\kappa_{n}.$$ 
Set  \[\rho_{n}:=\rho_{n}(\epsilon_{n+1}) =
  \underset{x\in B_{n+1}(\epsilon_{n+1})}\sup  \{\Vert D( c_{n})_{x}\Vert   ,\Vert D( d_{n})_{x} \Vert  \}.\] 
Suppose that  there exists $(\epsilon_{n})_{n\geq0}$ such that  \(\epsilon_{n-1}\leq \frac{\mu^{-1} _{n}+\alpha\kappa_{n}}{1+\alpha} \epsilon_{n}\)  for $n\geq 0$ and 
\begin{equation*}\label{deltamcrecimiento2}  
\rho_{n} <\varpi_{n}:=\min\left\{\frac{(\mu_{n}^{-1}-\kappa_{n})\alpha}{(1+\alpha)^{2}} ,\frac{(\gamma\mu_{n}^{-1}-\kappa_{n})}{(1+\alpha)(1+\gamma)},\frac{2\lambda\tilde{\lambda}\mu_{n}^{-1}-1- \lambda}{1+ \lambda} \right\}.
\end{equation*}

  There exists a sequence $(\epsilon_{n})_{n\leq -1}$ such that, considering $\epsilon=(\epsilon_{n})_{n\in\mathbb{Z}}$,   we have:

\begin{teo}\label{primerpropovariedade2} There exists a two-sided sequence $\{\mathcal{W}^{s}( \textbf{\textit{f}}_{0} ^{\, n}(p),\epsilon):n\in\mathbb{Z}\}$   of differentiable  submanifold of $M_{n}$ of size $2\epsilon_{n}$  for each  $n\in\mathbb{Z}$,     such that
\begin{enumerate}[\upshape (i)]
\item $\textbf{\textit{f}}^{\, n}_{0}(p)\in \mathcal{W}^{s}( \textbf{\textit{f}}_{0} ^{\, n}(p),\epsilon)$ and  $T_{\textbf{\textit{f}}^{\, n}_{0}(p)}\mathcal{W}^{s}( \textbf{\textit{f}}^{\, n}_{0}(p),\epsilon)=E^{s}_{\textbf{\textit{f}}^{\, n}_{0}(p)}$,  
\item $f_{n} (\mathcal{W}^{s}(  \textbf{\textit{f}}_{0} ^{\, n}(p),\epsilon))\subseteq \mathcal{W}^{s}(  \textbf{\textit{f}}_{0} ^{\, n+1}(p),\epsilon)$, and furthermore
\item if  $q\in \mathcal{W}^{s}( p_{n},\epsilon)$, where $p_{n}=\textbf{\textit{f}}_{0}^{\, n}(p)$, and $k\geq1$ we have  
\begin{equation}\label{desig121} d(\textbf{\textit{f}}_{n}^{k}(q),\textbf{\textit{f}}_{n}^{k}(p_{n}))\leq \frac{2}{\Delta_{n}}\varsigma_{n+k}\cdots\varsigma_{n+1}d(q,p_{n}),
\end{equation}
where $\varsigma_{k}=\frac{1+\alpha}{\mu_{k} ^{-1}-\varpi_{k}(1+\alpha)}$.  
\end{enumerate}
\end{teo}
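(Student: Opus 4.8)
The plan is to mirror the construction of Proposition \ref{funciongrafico} and Theorem \ref{primerpropovariedade}, replacing the forward maps by the inverse maps, since the local stable manifolds of $\textbf{\textit{f}}$ are exactly the local unstable manifolds of the time-reversed family. Concretely, $\hat{f}_n$ is the coordinate representation of $f_n^{-1}$ running from level $n+1$ to level $n$, and under it the roles of $E^s$ and $E^u$ are interchanged: since $G_n=F_n^{-1}$ we have $\sup_{v\in E^s_n}\Vert G_n^{-1}v\Vert/\Vert v\Vert=\mu_n\leq\lambda$ (so $G_n$ \emph{expands} $E^s$) and $\sup_{v\in E^u_{n+1}}\Vert G_nv\Vert/\Vert v\Vert=\kappa_n\leq\lambda$ (so $G_n$ \emph{contracts} $E^u$). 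Accordingly I would work with graphs over the stable ball: set $\Gamma^s_n(\alpha,\epsilon_n)=\{\phi:B^s_n(\epsilon_n)\rightarrow B^u_n(\epsilon_n):\phi\text{ is }\alpha\text{-Lipschitz},\ \phi(0)=0\}$ and $\Gamma^s(\alpha,(\epsilon_n)_n)=\{(\phi_n)_n:\phi_n\in\Gamma^s_n(\alpha,\epsilon_n)\}$ with the same supremum metric, which is again complete.

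First I would prove the exact analogue of Proposition \ref{funciongrafico} with $\mu_n$ and $\kappa_n$ interchanged. Writing $\hat{f}_n(v,w)=(c_n(v,w)+G_nv,\,d_n(v,w)+G_nw)$ with $v\in E^s$, $w\in E^u$, and representing a graph $\phi_{n+1}\in\Gamma^s_{n+1}$ by $w=\phi_{n+1}(v)$, I set $s_n(v)=G_nv+c_n(v,\phi_{n+1}(v))$ for $v\in B^s_{n+1}(\epsilon_{n+1})$. The injectivity estimate $\Vert s_n(v)-s_n(v')\Vert\geq(\mu_n^{-1}-\varpi_n(1+\alpha))\Vert v-v'\Vert$, together with the inclusion $B^s_n(\epsilon_n)\subseteq s_n(B^s_{n+1}(\epsilon_{n+1}))$, which is where the first hypothesis on the sequence $(\epsilon_n)$ enters, lets me define the new graph $\psi_n$ over $B^s_n(\epsilon_n)$ by the analogue of \eqref{fdephi}; the bound $\frac{\alpha\kappa_n+\varpi_n(1+\alpha)}{\mu_n^{-1}-\varpi_n(1+\alpha)}\leq\alpha$, which holds by the first term in the minimum defining $\varpi_n$, shows $\psi_n\in\Gamma^s_n(\alpha,\epsilon_n)$. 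This produces a graph transform $\hat{\textsf{G}}:\Gamma^s\rightarrow\Gamma^s$; the second term in $\varpi_n$ makes it a $\gamma$-contraction, so the Banach fixed-point theorem yields a unique $\phi^\star=(\phi_n^\star)_n$, and the invariant graphs $V^s_n(\epsilon_n)=\{(v,\phi_n^\star(v)):v\in B^s_n(\epsilon_n)\}$ satisfy $V^s_n(\epsilon_n)\subseteq\hat{f}_n(V^s_{n+1}(\epsilon_{n+1}))$. Upon exponentiation this is exactly the invariance $f_n(\mathcal{W}^s(\textbf{\textit{f}}_0^{\,n}(p),\epsilon))\subseteq\mathcal{W}^s(\textbf{\textit{f}}_0^{\,n+1}(p),\epsilon)$ of statement (ii).

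Next I would derive the forward contraction rate. Using the orthogonalizing metric $\langle\cdot,\cdot\rangle_\ast$ and the two-sided equivalence \eqref{cosadeequiv}, the computation that produced $\tau_n$ in the unstable case, now carried out for the contracting direction $E^s$ of the forward map $\tilde{f}_n$, gives $\varsigma_n=\frac{1+\alpha}{\mu_n^{-1}-\varpi_n(1+\alpha)}<\tilde\lambda$, where it is precisely the third term in the minimum defining $\varpi_n$ that forces $\varsigma_n<\tilde\lambda$. Iterating forward in the $\ast$-metric, where the rate is clean, and converting back through \eqref{cosadeequiv} only at the two endpoints so that a single factor $\Delta_n$ survives (exactly as in \eqref{desigualdadeexponenciales}), yields for $w\in B^s_n(\epsilon_n)$ and $k\geq1$ the $k$-fold bound with constant $\frac{2}{\Delta_n}\varsigma_{n+k}\cdots\varsigma_{n+1}$. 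Finally I set $\mathcal{W}^s(\textbf{\textit{f}}_0^{\,n}(p),\epsilon)=\exp_{\textbf{\textit{f}}_0^{\,n}(p)}(V^s_n(\epsilon_n))$; the tangency $T_{\textbf{\textit{f}}_0^{\,n}(p)}\mathcal{W}^s=E^s_{\textbf{\textit{f}}_0^{\,n}(p)}$ and the differentiable structure follow as in Remark \ref{difernciabilidademanifolds}, statement (ii) is the invariance just obtained, and transporting the last inequality through the exponential charts (whose injectivity radius gives $\Vert v\Vert=d(\exp_{\textbf{\textit{f}}_0^{\,n}(p)}(v),\textbf{\textit{f}}_0^{\,n}(p))$) produces \eqref{desig121}.

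The main point requiring care is the reversal of the time-direction of the free induction. In Theorem \ref{primerpropovariedade} the hypotheses are imposed for $n\leq0$ and the sequence $(\delta_n)_{n>0}$ is constructed, whereas here the hypotheses sit at $n\geq0$ and it is $(\epsilon_n)_{n\leq-1}$ that must be produced inductively. I would therefore run the choice of the $\epsilon_n$ and the verification of the inclusions $B^s_n(\epsilon_n)\subseteq s_n(B^s_{n+1}(\epsilon_{n+1}))$ backward in $n$, checking at each step that the first hypothesis on $(\epsilon_n)$ leaves enough room. Keeping the indices and the direction of iteration consistent, rather than transcribing the unstable argument verbatim, is the only genuine obstacle; the individual estimates are term-by-term images of those of Proposition \ref{funciongrafico} under the substitution $\mu_n\leftrightarrow\kappa_n$.
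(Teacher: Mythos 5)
Your proposal is correct and takes essentially the same route as the paper, which proves Theorem \ref{primerpropovariedade2} only by asserting it is the analogue of Proposition \ref{funciongrafico} and Theorem \ref{primerpropovariedade} under time reversal: you carry out exactly that analogy (graphs over the stable balls, the graph transform for $\hat{f}_{n}$ with $\mu_{n}\leftrightarrow\kappa_{n}$, the Banach fixed point, the $\ast$-metric conversion via \eqref{cosadeequiv} giving the single factor $2/\Delta_{n}$, and the three terms of $\varpi_{n}$ playing the same roles as those of $\omega_{n}$). Your closing remark about running the free induction backward in $n$ to produce $(\epsilon_{n})_{n\leq-1}$ is precisely the one point where the stable case is not a verbatim transcription, and you handle it as the paper intends.
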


 As in \cite{luisb}, we will call the manifold $\mathcal{W}^{u}(  p,\delta )$ as \textit{admissible} $(u,\alpha,\delta)$-\textit{manifold at} $p$ and $\mathcal{W}^{s}(  p,\epsilon )$ as \textit{admissible} $(s,\alpha,\epsilon)$-\textit{manifold at} $p$.  These manifolds do not necessarily coincide with the sets given  in Definition \ref{conjuntosestaviesfam},     since    $\Delta_{k}$ could   decrease quickly
 when $k\rightarrow \pm\infty.$

\medskip

The first inequality \eqref{deltamcrecimiento} means that the radius $\delta_{n}$  of the balls  $B_{n}^{u}(\delta_{n})$  must not  decrease  very fast  as $n\rightarrow -\infty$. This condition is sufficient for the invariance of the admissible  manifolds obtained in   Theorem \ref{primerpropovariedade}  by  \textbf{\textit{f}} (see \eqref{rm}). Remember   we have considered exponential charts to work on the ``ambient Euclidian'' and the $\delta_{n}$'s  depend  on both   \textbf{\textit{f}} and $r(\textbf{\textit{f}}^{n}(p))$. This fact is of great importance to the construction of unstable  (stable) manifolds, because the expansions (contractions) of each manifold could be caused by the   geometry of  each component but not because of the family (see Example \ref{contraexemplo}).

\section{Local Stable  and Unstable  Manifolds for Anosov Families}
In the previous section we obtained admissible manifolds for Anosov families whose expansion  or contraction  are well controlled. In this section we will give certain conditions with which  the stable and unstable sets   (see Definition \ref{conjuntosestaviesfam})     coincide  with the admissible manifolds (see Lemmas  \ref{limitedosangulos} and \ref{lema2deigualdade}). Finally, in   Theorems   \ref{variedadeinstave}     and \ref{variedadeestavel} we show the main   objective of this work, the   unstable and stable manifold Theorems  for Anosov families.

\medskip

 We had talked about the importance of maintaining the metrics established, because
the notion of the  Anosov  family depends on the Riemannian metrics on each $M_{n}$ (see \cite{alb}, \cite{Jeo2}). Changing the metrics on each component  we could get stable (unstable) sets  
very different  (see Example  \ref{contraexemplo}). However, we have: 

 \begin{propo}\label{porpomantenimientovariedades}  Let $\langle\cdot,\cdot\rangle$ and $\langle\cdot,\cdot\rangle^{\prime}$ be uniformly equivalent  Riemannian metrics   on \textbf{M}.  Fix $p\in M_{0}$. There exist   sequences of   positive  numbers   $ \varepsilon=(\varepsilon_{i})_{i\in\mathbb{Z}}$,  $\varepsilon^{\prime}= (\varepsilon_{i} ^{\prime})_{i\in\mathbb{Z}}$ and $ \tilde{\varepsilon}=(\tilde{\varepsilon}_{i}  )_{i\in\mathbb{Z}}$ such that, for   $r=u,s,$ 
 $$\mathcal{N}^{r}(p,\varepsilon,\langle\cdot,\cdot\rangle)\subseteq \mathcal{N}^{r}(p,\varepsilon   ^{\prime}, \langle\cdot,\cdot\rangle^{\prime})\subseteq \mathcal{N}^{r}( p,\tilde{\varepsilon}   , \langle\cdot,\cdot\rangle ).$$    
 \end{propo}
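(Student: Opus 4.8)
The plan is to unwind the two definitions of the local stable and unstable sets and show that uniform equivalence of the metrics transfers the defining conditions in both directions, with the three sequences $\varepsilon, \varepsilon', \tilde{\varepsilon}$ absorbing the comparison constants. Recall that uniform equivalence means there exist constants $0 < c_1 \leq c_2$ such that $c_1 \Vert v \Vert \leq \Vert v \Vert' \leq c_2 \Vert v \Vert$ for all $v \in T\textbf{M}$, and hence the induced Riemannian distances satisfy $c_1\, d(x,y) \leq d'(x,y) \leq c_2\, d(x,y)$ for nearby points (on the components, where the metrics are genuine Riemannian metrics and the distance is recovered by integrating along paths). The set $\mathcal{N}^{r}(p, \varepsilon, \langle\cdot,\cdot\rangle)$ depends on the metric in exactly two ways: through the ball condition $\textbf{\textit{f}}_i^{\pm n}(q) \in B(\textbf{\textit{f}}_i^{\pm n}(p), \varepsilon_{i\pm n})$, and through the exponential-decay condition $\Theta_{p,q} < 0$ (or $\Omega_{p,q} < 0$). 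The key observation is that the decay condition is \emph{metric-independent}: since $\tfrac{1}{n}\log(c_1\, d) $ and $\tfrac{1}{n}\log(c_2\, d)$ differ from $\tfrac{1}{n}\log d$ by $\tfrac{1}{n}\log c_j \to 0$, the $\limsup$'s $\Theta_{p,q}$ and $\Omega_{p,q}$ computed with respect to $\langle\cdot,\cdot\rangle$ and $\langle\cdot,\cdot\rangle'$ coincide. Hence only the ball conditions need tracking.

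First I would establish the distance comparison $c_1\, d \leq d' \leq c_2\, d$ on each component from the norm comparison, taking care that this holds for points close enough that the minimizing geodesic stays in the region where both metrics are comparable; since the $\varepsilon_i$ can be chosen small (below the injectivity radii used in Section 3), this is not an obstacle. Next I would handle the first inclusion $\mathcal{N}^{r}(p,\varepsilon,\langle\cdot,\cdot\rangle) \subseteq \mathcal{N}^{r}(p,\varepsilon',\langle\cdot,\cdot\rangle')$: if $q$ lies in the left-hand set, then $d(\textbf{\textit{f}}_i^n(q), \textbf{\textit{f}}_i^n(p)) < \varepsilon_{i+n}$ for all $n \geq 0$, so $d'(\textbf{\textit{f}}_i^n(q), \textbf{\textit{f}}_i^n(p)) \leq c_2\, \varepsilon_{i+n}$, and setting $\varepsilon_j' := c_2\, \varepsilon_j$ gives the ball condition in the primed metric; the decay condition carries over verbatim by the metric-independence above. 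The second inclusion is entirely symmetric: starting from the primed ball condition with radius $\varepsilon_j'$ and using $d \leq c_1^{-1} d'$, one sets $\tilde{\varepsilon}_j := c_1^{-1}\varepsilon_j' = (c_2/c_1)\,\varepsilon_j$ to recover the unprimed ball condition. Both the stable case ($r=s$, forward orbits, $\Theta$) and unstable case ($r=u$, backward orbits, $\Omega$) are handled by the identical argument with $n \geq 0$ replaced by $n \geq 1$ and forward iterates by backward ones, so I would phrase the argument once for a generic direction.

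The mildly delicate point, and the one I would flag as the main thing to get right rather than a genuine obstacle, is that the ball radii must be chosen small enough and in the correct chain so that three things hold simultaneously: the comparison $c_1 d \leq d' \leq c_2 d$ is valid on the relevant balls, the radii $\varepsilon_j, \varepsilon_j', \tilde{\varepsilon}_j$ remain within the injectivity radii $r(\textbf{\textit{f}}_0^j(p))$, and the nesting $\varepsilon_j \leq \varepsilon_j' \leq \tilde{\varepsilon}_j$ (after the rescalings $\varepsilon_j' = c_2 \varepsilon_j$ and $\tilde{\varepsilon}_j = (c_2/c_1)\varepsilon_j$) is consistent with set containment. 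Since $c_2 \geq c_1 > 0$ forces $c_2 \geq c_1$ and $c_2/c_1 \geq 1$, the radii are automatically nondecreasing through the chain, so the containments are compatible. I therefore expect the proof to be short: choose any admissible $\varepsilon = (\varepsilon_i)$ (small enough for the distance comparison and below the injectivity radii), define $\varepsilon' = c_2 \varepsilon$ and $\tilde{\varepsilon} = (c_2/c_1)\varepsilon$ componentwise, invoke the metric-independence of $\Theta_{p,q}$ and $\Omega_{p,q}$, and read off both inclusions from the distance comparison. No fixed-point or dynamical input beyond the definitions is needed here; the content is purely the bi-Lipschitz comparison of the two metrics.
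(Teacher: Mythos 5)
Your proposal is correct and takes essentially the same approach as the paper: both arguments transfer the ball conditions via the uniform-equivalence constants (the paper picks $\varepsilon_n'$ as half the diameter of the image of the unprimed ball under the identity map, keeping radii below the injectivity radii, where you rescale by $c_2$ and $c_2/c_1$), and both rest on the key observation that the bi-Lipschitz constants wash out of $\frac{1}{n}\log d$, making the decay conditions $\Theta_{p,q}<0$ and $\Omega_{p,q}<0$ metric-independent. Your one point of extra caution is unnecessary but harmless: since the norm comparison holds on all of $\textbf{M}$, integrating along paths gives $c_1\, d \leq d' \leq c_2\, d$ globally on each component, not just on small balls.
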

 \begin{proof} We will show only the stable case,   since the unstable case  is analogous.  Consider  $\textbf{N}=\textbf{M}$ with the Riemannian metric  $\langle\cdot,\cdot\rangle^{\prime}$,  that is, $N_{i}=M_{i}$ with the Riemannian metric  $\langle\cdot,\cdot\rangle_{i}^{\prime}:=\langle\cdot,\cdot\rangle^{\prime}|_{M_{i}}$ for each $i\in\mathbb{Z}.$ Let  $I_{i}:(M_{i},\langle\cdot,\cdot\rangle)\rightarrow (N_{i},\langle\cdot,\cdot\rangle^{\prime})$ be the identity.      For each $n\geq 0$ we  can find a    $\varepsilon_{n}>0$ small enough such that 
     \[\text{diam} [B(\textbf{\textit{f}}_{0} ^{n}(p),\varepsilon_{n} ,\langle\cdot,\cdot\rangle)]<r(\textbf{\textit{f}}_{0} ^{n}(p))\text{ and }\text{diam} [I_{n}(B( \textbf{\textit{f}}_{0} ^{n}(p),\varepsilon_{n}   ,\langle\cdot,\cdot\rangle))]<r^{\prime}(\textbf{\textit{f}}_{0} ^{n}(p))\] 
 (we use the notations  $B( \textbf{\textit{f}}_{i} ^{n}(p),\varepsilon_{n},\langle\cdot,\cdot\rangle)$ for the ball in  $ (M_{n},\langle\cdot,\cdot\rangle)$  and   $r^{\prime}(\textbf{\textit{f}}_{0} ^{n}(p))$   for the injectivity radius of the exponential map at $\textbf{\textit{f}}_{0} ^{n}(p)$ considering the metric $\langle\cdot,\cdot\rangle^{\prime}$ on \textbf{M}).    
For each $n\geq0$, take  $$\varepsilon_{n} ^{\prime}=\frac{1}{2}\text{diam} [I_{n}(B(  \textbf{\textit{f}}_{0} ^{n}(p),\varepsilon_{n} ,\langle\cdot,\cdot\rangle))].$$
   Fix  $q\in \mathcal{N}^{s}( p,\varepsilon,\langle\cdot ,\cdot\rangle).$ Thus $I_{n}(\textbf{\textit{f}}_{0} ^{n}(q))\in B( \textbf{\textit{f}}_{0} ^{n}(p),\varepsilon^{\prime}_{n} ,\langle\cdot,\cdot\rangle^{\prime})$ for all $n\geq0.$  
Let  $v\in T_{p}M_{0}$ be such that $\text{exp}_{p}(v)=q.$ Since $\langle\cdot,\cdot\rangle$ and $\langle\cdot,\cdot\rangle^{\prime}$ are uniformly equivalent, there exist positive numbers $k,K$ such that $k\Vert v\Vert ^{\prime}\leq \Vert v\Vert \leq K\Vert v\Vert ^{\prime}$, for all $v\in T_{\textbf{\textit{f}}^{n} (p)}M_{n}$, $n\geq 0$, where $\Vert \cdot\Vert$ and $\Vert\cdot\Vert^{\prime}$ are the norms induced by $\langle\cdot,\cdot\rangle$ and $\langle\cdot,\cdot\rangle^{\prime}$, respectively.   Thus,  \[\frac{1}{n}\log d(\textbf{\textit{f}}^{n}(p),\textbf{\textit{f}}^{n}(q)) =\frac{1}{n}\log\Vert  \tilde{f}_{i+n-1}  \circ\dots\circ\tilde{f}_{i} (v)\Vert \geq \frac{1}{n}\log k\Vert  \tilde{f}_{i+n-1}  \circ\dots\circ \tilde{f}_{i}  (v)\Vert^{\prime}.
\]
Therefore, $q\in \mathcal{N}^{s}( p,\varepsilon^{\prime},\langle\cdot,\cdot\rangle)$. Thus, $ \mathcal{N}^{s}( p, \varepsilon ,\langle\cdot,\cdot\rangle)\subseteq \mathcal{N}^{s}( p ,\varepsilon^{\prime},\langle\cdot,\cdot\rangle^{\prime})$.
Analogously we can prove  the existence of the sequence $\tilde{\varepsilon}=(\tilde{\varepsilon}_{i})_{i\in\mathbb{Z}}$ such that     $$ \mathcal{N}^{s}( p,\varepsilon^{\prime},\langle\cdot,\cdot\rangle^{\prime})\subseteq \mathcal{N}^{s}( p ,\tilde{\varepsilon},\langle\cdot,\cdot\rangle),$$ 
which proves   the proposition.
 \end{proof}

 In the next lemma we show  $\mathcal{W}^{u}(p,\delta)\subseteq \mathcal{N}^{u}(p,\delta )$ and $\mathcal{W}^{s}(p,\epsilon)\subseteq \mathcal{N}^{s}(p,\epsilon)$.   
 In  Lemma \ref{lema2deigualdade} we will give a condition for the reverse inclusion.

 \begin{lem}\label{limitedosangulos} For each $p\in  M_{0}$ we have 
 \[\mathcal{W}^{u}(p,\delta)\subseteq \mathcal{N}^{u}(p,\delta)\quad\text{ and }\quad  \mathcal{W}^{s}(p,\epsilon)\subseteq \mathcal{N}^{s}( p,\epsilon).\]
\end{lem}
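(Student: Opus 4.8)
The plan is to read off the unstable inclusion $\mathcal{W}^{u}(p,\delta)\subseteq\mathcal{N}^{u}(p,\delta)$ directly from the two structural properties of $\mathcal{W}^{u}$ recorded in Theorem \ref{primerpropovariedade}, and then to obtain the stable inclusion by the mirror-image argument based on Theorem \ref{primerpropovariedade2}. Fix $q\in\mathcal{W}^{u}(p,\delta)$ with $p=p_{0}$. By Definition \ref{conjuntosestaviesfam} membership in $\mathcal{N}^{u}(p,\delta)$ amounts to two things: that $\textbf{\textit{f}}_{0}^{-n}(q)\in B(p_{-n},\delta_{-n})$ for every $n\geq1$, and that $\Omega_{p,q}<0$. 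I would verify these two in turn.

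First I would establish the orbit confinement. By the invariance property (ii) of Theorem \ref{primerpropovariedade}, $f_{n-1}^{-1}(\mathcal{W}^{u}(p_{n},\delta))\subseteq\mathcal{W}^{u}(p_{n-1},\delta)$; iterating this downward from $n=0$ gives $\textbf{\textit{f}}_{0}^{-n}(q)\in\mathcal{W}^{u}(p_{-n},\delta)$ for all $n\geq1$. Since each admissible manifold $\mathcal{W}^{u}(p_{-n},\delta)=\text{exp}_{p_{-n}}(V_{-n}(\delta_{-n}))$ is constructed inside the exponential chart about $p_{-n}$ associated with the radius $\delta_{-n}$, this places $\textbf{\textit{f}}_{0}^{-n}(q)$ in the ball $B(p_{-n},\delta_{-n})$, which is exactly the first defining condition of $\mathcal{N}^{u}(p,\delta)$.

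For the exponential-contraction condition I would invoke property (iii) of Theorem \ref{primerpropovariedade} with the index choice $n+1=0$, so that $p_{n+1}=p$. Writing $m=k+1$, this yields, for every $m\geq1$,
\[
d(\textbf{\textit{f}}_{0}^{-m}(q),p_{-m})\leq\frac{2}{\Delta_{0}}\,\tau_{-m}\cdots\tau_{-1}\,d(q,p).
\]
Since $\tau_{j}<\tilde{\lambda}<1$ for every $j$ (as shown just before Theorem \ref{primerpropovariedade}), the product is dominated by $\tilde{\lambda}^{m}$, whence
\[
\frac{1}{m}\log d(\textbf{\textit{f}}_{0}^{-m}(q),p_{-m})\leq\frac{1}{m}\log\!\Big(\tfrac{2}{\Delta_{0}}d(q,p)\Big)+\log\tilde{\lambda}\xrightarrow[m\to\infty]{}\log\tilde{\lambda}<0 .
\]
Taking the $\limsup$ gives $\Omega_{p,q}\leq\log\tilde{\lambda}<0$, and together with the confinement this proves $q\in\mathcal{N}^{u}(p,\delta)$.

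The stable inclusion is entirely parallel, running the same two steps forward in time: property (ii) of Theorem \ref{primerpropovariedade2} keeps the positive orbit of $q\in\mathcal{W}^{s}(p,\epsilon)$ on the manifolds $\mathcal{W}^{s}(p_{n},\epsilon)\subseteq B(p_{n},\epsilon_{n})$, while estimate (iii) of that theorem, combined with the bound $\varsigma_{n}<\tilde{\lambda}$ (established exactly as $\tau_{n}<\tilde{\lambda}$, using $\varpi_{n}\leq\frac{2\lambda\tilde{\lambda}\mu_{n}^{-1}-1-\lambda}{1+\lambda}$), forces $\Theta_{p,q}\leq\log\tilde{\lambda}<0$. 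The step I expect to require the most care is the orbit confinement: it rests on the admissible manifolds $\mathcal{W}^{u}(p_{-n},\delta)$ (resp.\ $\mathcal{W}^{s}(p_{n},\epsilon)$) genuinely lying inside the prescribed balls, which is where the radius bookkeeping of Section 3 — and in particular the first inequality in \eqref{deltamcrecimiento}, ensuring that the $\delta_{n}$ do not shrink too fast as $n\to-\infty$ — must be invoked. The exponential-decay half of the statement, by contrast, is an immediate consequence of property (iii).
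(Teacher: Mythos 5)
Your proof is correct and takes essentially the same route as the paper's: both obtain, for $q\in\mathcal{W}^{u}(p,\delta)$, the confinement $\textbf{\textit{f}}_{0}^{-n}(q)\in B(\textbf{\textit{f}}_{0}^{-n}(p),\delta_{-n})$ and the estimate $d(\textbf{\textit{f}}_{0}^{-n}(q),\textbf{\textit{f}}_{0}^{-n}(p))\leq \frac{2}{\Delta_{0}}\tau_{-n}\cdots\tau_{-1}\,d(p,q)$ from Theorem \ref{primerpropovariedade}, bound the product by $\tilde{\lambda}^{n}$ via $\tau_{k}<\tilde{\lambda}<1$ to get $\Omega_{p,q}\leq\log\tilde{\lambda}<0$, and run the mirror argument with Theorem \ref{primerpropovariedade2} and $\varsigma_{k}<\tilde{\lambda}$ for the stable inclusion. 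The only difference is cosmetic: you make explicit the confinement step (iterating the invariance property (ii) and noting the admissible manifolds sit inside the exponential charts of radius $\delta_{-n}$), which the paper cites wholesale from Theorem \ref{primerpropovariedade}.
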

\begin{proof} 
We will prove $\mathcal{W}^{u}(p,\delta)\subseteq \mathcal{N}^{u}(p,\delta)$.  Take $q\in \mathcal{W}^{u}(  p,\delta)$. By Theorem  \ref{primerpropovariedade},    we have $\textbf{\textit{f}}_{0}^{-n}(q)\in B(\textbf{\textit{f}}_{0}^{-n}(p),\delta_{-n})$ and  \[ d(\textbf{\textit{f}}_{0}^{-n}(q),\textbf{\textit{f}}_{0}^{-n}(p))\leq\frac{2}{\Delta_{0}}  \tau_{-n}\cdots\tau_{-1}d(p,q) \quad\text{ for each }n\geq1.\] 
 Since $\tau_{k}<\tilde{\lambda}<1 $,   we have     \(\underset{n\rightarrow \infty}\limsup \frac{1}{n}\log d(\textbf{\textit{f}}_{0}^{-n}(q),\textbf{\textit{f}}_{0}^{-n}(p))\leq \log \tilde{\lambda}<0.\)  
Consequently,  $q\in \mathcal{N} ^{s}( p,\delta)$. 
\end{proof} 

 
  

\begin{lem}\label{lema2deigualdade}   Set \begin{align*} \Omega &=   \underset{n\rightarrow -\infty}\limsup\,\theta_{n}\quad\text{ }\quad \tilde{\Omega}=\liminf_{n\rightarrow\infty} \frac{1}{n}\log\frac{\Delta_{-n}}{2} \varsigma_{0}^{-1}\cdots\varsigma_{-n+1}^{-1}  \\
\Theta &= \underset{n\rightarrow  \infty}\limsup\,\theta_{n} \quad\text{ }\quad\tilde{\Theta}=\liminf_{n\rightarrow\infty} \frac{1}{n}\log \frac{\Delta_{n}}{2}  \tau_{0}^{-1} \cdots\tau_{n-1}^{-1}
.\end{align*}  Thus
\begin{enumerate}[\upshape (i)]
\item Assume that    we can choose the $\delta_{n}$'s such that $\delta_{n}\leq \epsilon_{n}$ for each   $n\leq 0$. If $   \Omega>0$ and $\tilde{\Omega}\geq 0,$ then     there exists a sequence of positive numbers  $ \delta^{\prime}=(\delta_{n} ^{\prime})_{n\in \mathbb{Z}}$ such that $\mathcal{N}^{u}( p,\delta ^{\prime}) \subseteq  \mathcal{W}^{u}(p,\delta^{\prime} )$.
\item Assume that     we can choose the $\epsilon_{n}$'s such that $\epsilon_{n}\leq \delta_{n}$ for each   $n\geq 0$. If $ \Theta>0$ and $\tilde{\Theta}\geq 0$, then   there exists a sequence of positive numbers  $ \epsilon^{\prime}  =(\epsilon_{n} ^{\prime})_{n\in \mathbb{Z}}$ such that $\mathcal{N}^{s}( p,\epsilon^{\prime} ) \subseteq   \mathcal{W}^{s}( p,\epsilon^{\prime} )$.        
\end{enumerate} 
\end{lem}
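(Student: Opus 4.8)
The plan is to establish the two reverse inclusions by contradiction, exploiting that the admissible manifolds are graphs over the unstable (resp. stable) direction: a point of $\mathcal{N}^{u}(p,\delta')$ that fails to lie on $\mathcal{W}^{u}(p,\delta')$ carries a nonzero \emph{transverse deviation} from the graph, and this deviation \emph{expands} under the very iteration along which the orbit is required to contract. I will detail only the unstable case (i); case (ii) is symmetric after interchanging forward and backward iteration and the roles of the rates $\varsigma_{k}$ and $\tau_{k}$.

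First I would construct $\delta'=(\delta'_{n})_{n\in\mathbb{Z}}$ with $\delta'_{n}\le\delta_{n}$. Using $\Omega=\limsup_{n\to-\infty}\theta_{n}>0$ together with the first inequality in \eqref{deltamcrecimiento} (which prevents the $\delta_{n}$ from decaying too fast as $n\to-\infty$), I would choose the $\delta'_{n}$ small enough that for every $q\in\mathcal{N}^{u}(p,\delta')$ the whole backward orbit $q_{-n}:=\textbf{\textit{f}}_{0}^{-n}(q)$ stays in the region where the chart at $p_{-n}:=\textbf{\textit{f}}_{0}^{-n}(p)$ and the graph $V_{-n}(\delta'_{-n})$ of \eqref{manifold} are valid; concretely, so that $\xi_{-n}:=\exp^{-1}_{p_{-n}}(q_{-n})=(s_{-n},u_{-n})\in E^{s}\oplus E^{u}$ is well defined with $\Vert\xi_{-n}\Vert=d(q_{-n},p_{-n})$ and $u_{-n}\in B^{u}_{-n}(\delta'_{-n})$, so that the deviation $\eta_{-n}:=s_{-n}-\phi^{\star}_{-n}(u_{-n})$ from the graph makes sense. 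The hypothesis $\delta_{n}\le\epsilon_{n}$ for $n\le0$ is exactly what makes the stable admissible manifolds, hence the rates $\varsigma_{k}$, available along this backward orbit.

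The core is a one-step expansion estimate for $\eta_{-n}$. Passing to the orthogonal metric $\langle\cdot,\cdot\rangle_{\ast}$ of \cite{Jeo2} in which $E^{s}\perp E^{u}$, the same graph-transform computation that yields \eqref{rm} and \eqref{psim}, applied in the stable transverse direction, shows that one backward step expands the deviation by the factor $\varsigma^{-1}>1$ governing the stable contraction in Theorem \ref{primerpropovariedade2}; iterating gives $\Vert\eta_{-n}\Vert_{\ast}\ge\varsigma_{0}^{-1}\cdots\varsigma_{-n+1}^{-1}\Vert\eta_{0}\Vert_{\ast}$. Since $\phi^{\star}_{-n}$ is $\alpha$-Lipschitz we have $\Vert\xi_{-n}\Vert_{\ast}\ge(1+\alpha)^{-1}\Vert\eta_{-n}\Vert_{\ast}$, and converting to the original metric by \eqref{cosadeequiv} gives
\[
d(\textbf{\textit{f}}_{0}^{-n}(q),\textbf{\textit{f}}_{0}^{-n}(p))\ \ge\ \frac{\Vert\eta_{0}\Vert_{\ast}}{1+\alpha}\,\frac{\Delta_{-n}}{2}\,\varsigma_{0}^{-1}\cdots\varsigma_{-n+1}^{-1}.
\]
Taking $\tfrac1n\log$ and $\liminf_{n\to\infty}$, the fixed factor $\Vert\eta_{0}\Vert_{\ast}/(1+\alpha)$ disappears and the right-hand side tends to $\tilde\Omega$. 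Hence $\Omega_{p,q}\ge\tilde\Omega\ge0$, contradicting $\Omega_{p,q}<0$ from the definition of $\mathcal{N}^{u}(p,\delta')$. Therefore $\eta_{0}=0$, i.e. $s_{0}=\phi^{\star}_{0}(u_{0})$, which says precisely $q\in\mathcal{W}^{u}(p,\delta')$, proving (i). Part (ii) runs identically, now controlling the transverse (unstable) deviation from $\mathcal{W}^{s}$ under forward iteration with the rates $\tau_{k}$ of Theorem \ref{primerpropovariedade}, using $\Theta>0$ to build $\epsilon'$ and $\tilde\Theta\ge0$ to contradict $\Theta_{p,q}<0$.

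I expect the main obstacle to be the first step: constructing $\delta'$ and verifying that the backward orbit of every $q\in\mathcal{N}^{u}(p,\delta')$ genuinely stays in the chart domains and inside the unstable cones, so that $\eta_{-n}$ is defined and the one-step expansion estimate is legitimate at \emph{every} index rather than merely along a subsequence. This is exactly where $\Omega>0$ enters: if the angles $\theta_{-n}$ were allowed to degenerate, a $d$-small displacement could split into large, nearly cancelling stable and unstable components, the cone invariance used in Proposition \ref{funciongrafico} could fail, and the comparison between a $d$-ball and a product $B^{s}_{-n}\times B^{u}_{-n}$ would break down; the clean contradiction then comes from $\tilde\Omega\ge0$.
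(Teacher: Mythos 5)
Your overall strategy is the paper's in spirit---argue by contradiction, let the stable-direction deviation expand under backward iteration at the rate $\varsigma^{-1}$, convert to the original metric with the factor $\Delta_{-n}/2$, and play $\tilde\Omega\geq 0$ against $\Omega_{p,q}<0$---but the execution has a genuine gap, and it sits exactly where you flagged it. You require the deviation $\eta_{-n}=s_{-n}-\phi^{\star}_{-n}(u_{-n})$ to be defined, and the one-step expansion estimate to be legitimate, \emph{at every index} $n$, and you claim $\Omega>0$ delivers this. It does not: $\Omega=\limsup_{n\rightarrow-\infty}\theta_{n}>0$ only guarantees a subsequence of times at which the angles are bounded below by some $\nu>0$; in between, $\theta_{-n}$ may degenerate to zero infinitely often (the lemma is precisely meant to cover families without the property of the angles, cf.\ Example \ref{parelelizavelangulos}). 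At a time with small angle, a point with $d(q_{-n},p_{-n})\leq\delta'_{-n}$ has stable/unstable components of size up to roughly $\delta'_{-n}/\sin\theta_{-n}$, so $u_{-n}$ may leave $B^{u}_{-n}(\delta_{-n})$ and $\eta_{-n}$ is simply undefined, while the cone and nonlinearity bounds behind the graph-transform estimate also fail there. Nor can you shrink $\delta'_{-n}$ time by time to force the ball into the product region at \emph{every} index: with angles degenerating along a subsequence, the required $\delta'_{-n}$ would decay too fast as $n\rightarrow-\infty$ and violate the first condition in \eqref{deltamcrecimiento}, which is what makes Proposition \ref{funciongrafico} and Theorem \ref{primerpropovariedade} available for the sequence $\delta'$ in the first place.

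The paper's proof avoids the step-by-step scheme entirely. Fixing $\nu\in(0,\Omega)$, it picks only a subsequence $(n_{i})$ with angle $\geq\nu$, chooses $\delta'_{n}\leq\delta_{n}/3$ still satisfying \eqref{deltamcrecimiento} so that the ball $B(\textbf{\textit{f}}_{0}^{-n_{i}}(p),\delta'_{-n_{i}})$ embeds in the local product $\mathcal{W}^{s}\times\mathcal{W}^{u}$ \emph{at those times only}, and decomposes $(\tilde f_{-n_{i}})^{-1}\circ\cdots\circ(\tilde f_{-1})^{-1}(\mathrm{exp}_{p}^{-1}(q))=(x_{-n_{i}},y_{-n_{i}})$ there. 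The growth of $\Vert x_{-n_{i}}\Vert$ and decay of $\Vert y_{-n_{i}}\Vert$ are then obtained from the already-proved \emph{global} rate estimates \eqref{desig1} and \eqref{desig121} of Theorems \ref{primerpropovariedade} and \ref{primerpropovariedade2}, applied between times $-n_{i}$ and $0$ (this is where the hypothesis $\delta_{n}\leq\epsilon_{n}$ for $n\leq 0$ enters), giving $\Vert(x_{-n_{i}},y_{-n_{i}})\Vert\geq\frac{\Delta_{-n_{i}}}{2}\varsigma_{0}^{-1}\cdots\varsigma_{-n_{i}+1}^{-1}\Vert x_{0}\Vert-\frac{2}{\Delta_{0}}\tilde\lambda^{n_{i}}\Vert y_{0}\Vert$. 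Since $\Omega_{p,q}$ is a $\limsup$, a lower bound along the subsequence already forces $\Omega_{p,q}\geq\tilde\Omega\geq0$, the desired contradiction. To repair your argument, replace the per-index deviation iteration by this subsequence decomposition together with the global estimates; your final limit computation and the reduction of (ii) to (i) by symmetry then go through as you wrote them.
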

\begin{proof}We will prove (i). Fix $\nu\in (0,\Omega)$. Let $(n_{i})_{i\in\mathbb{N}}$ be a sequence of natural numbers, with $0=n_{0}<n_{1}<\cdots <n_{m}<\cdots$, and  $\theta_{n_{i}}\geq \nu$ for each $i\geq0.$ Since $\delta_{n}\leq \epsilon_{n}$ and $   \nu>0$, we can choose   $\delta_{n}^{\prime} \leq \delta_{n}/3$ small enough such that     $\delta^{\prime}=(\delta_{n}^{\prime})_{n\in \mathbb{Z}}$  satisfies  \eqref{deltamcrecimiento}     and \[B(\textbf{\textit{f}}_{0}^{-n_{i}}(p),\delta_{-n_{i}}^{\prime})\subseteq  \mathcal{W}^{s}(  \textbf{\textit{f}}_{0}^{-n_{i}}(p),\epsilon )\times \mathcal{W}^{u}(\textbf{\textit{f}}_{0}^{-n_{i}}(p),\delta ) ,\quad\text{ for each }i\geq0.\]
   It follows from Theorem \ref{primerpropovariedade} there exists a family \(\{\mathcal{W}^{u}(\textbf{\textit{f}}_{0}^{n}(p),\delta^{\prime} ):n\in\mathbb{Z}\}\) of admissible $(u,\alpha, \delta^{\prime})$-manifold.  Next, we prove that $\mathcal{N} ^{u}( p,\delta^{\prime} )\subseteq \mathcal{W} ^{u}(p,\delta^{\prime} )$. 
    Indeed, suppose there exists  $q\in \mathcal{N}^{u}( p,\delta^{\prime} )\setminus  \mathcal{W} ^{u}(p,\delta^{\prime} )$.  Since $\textbf{\textit{f}}_{0} ^{-n_{i}}(q)\in B  (\textbf{\textit{f}}_{0} ^{-n_{i}}(p),\delta_{-n_{i}}^{\prime})$, we have     \[(\tilde{f}_{-n_{i}})^{-1}\circ\dots\circ(\tilde{f}_{-1})^{-1} (\text{exp}_{p} ^{-1}(q) )=(x_{-n_{i}},y_{-n_{i}})\in \mathcal{W}^{s}(  \textbf{\textit{f}}_{0} ^{-n_{i}}(p),\epsilon )\times \mathcal{W}^{u}(\textbf{\textit{f}}_{0} ^{-n_{i}}(p),\delta),  \] 
 for all $i\geq0,$ where $x_{-n_{i}}\in \mathcal{W}^{s}( \textbf{\textit{f}}_{0} ^{-n_{i}}(p)  ,\epsilon)\setminus\{0\}$ and $y_{-n_{i}} \in \mathcal{W}^{u}(  \textbf{\textit{f}}_{0} ^{-n_{i}}(p) ,\delta )$. We can obtain from \eqref{desig1}  and \eqref{desig121} that   \begin{align*}\Vert (x_{-n_{i}},& y_{-n_{i}})\Vert    \geq  \Vert x_{-n_{i}}\Vert -  \Vert y_{-n_{i}}\Vert  \geq \frac{\Delta_{-n_{i}}}{2}\varsigma_{0}^{-1}\cdots\varsigma_{-n_{i}+1}^{-1}  \Vert x_{0}\Vert  -\frac{2}{\Delta_{0}} \tilde{\lambda}^{n_{i}} \Vert y_{0}\Vert. 
\end{align*}
We have  $\frac{2}{\Delta_{0}} \tilde{\lambda}^{n_{i}} \Vert y_{0}\Vert \rightarrow 0$ as $i\rightarrow +\infty.$  Consequently, 
\begin{equation*}\limsup_{i\rightarrow\infty} \frac{1}{n_{i}}\log \Vert (x_{-n_{i}},  y_{-n_{i}})\Vert   \geq \liminf_{n\rightarrow\infty} \frac{1}{n}\log\frac{\Delta_{-n}}{2} \varsigma_{0}^{-1}\cdots\varsigma_{-n+1}^{-1} \geq 0.
\end{equation*}
 This fact contradicts that $q\in   \mathcal{N}^{s}(p,\delta^{\prime})$. 
\end{proof}

 From now on we will assume that for each $p\in M_{0}$ we can choose the sequences \((\delta_{n})_{n\in\mathbb{Z}}\) and \((\epsilon_{n})_{n\in\mathbb{Z}}\) as in Theorems \ref{primerpropovariedade} and \ref{primerpropovariedade2},  such that 
 \begin{equation}\label{cccc}
\delta_{-n}\leq\epsilon_{-n} \text{ and }\epsilon_{n}\leq\delta_n \text{ for all  }n>0, \, \, 0\leq \min\{\tilde{\Omega}, \tilde{\Theta}\}\,\text{ and }\, 0<\min\{\Omega,\Theta\} .
\end{equation} 

\begin{obs}It is not difficult to prove that  if \textbf{\textit{f}} satisfies the property of the angles then \(0\leq \min\{\tilde{\Omega}, \tilde{\Theta}\}\)  and \( 0<\min\{\Omega,\Theta\}.\)\end{obs}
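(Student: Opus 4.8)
The plan is to translate the property of the angles into uniform two-sided bounds on the quantities $\Delta_n$ and then read off each of the four inequalities directly. Recall that the property of the angles supplies $\mu\in(0,1)$ with $\cos(\theta_n)\in[0,\mu]$ for every $n$. First I would dispose of the easy half involving $\Omega$ and $\Theta$: since $\cos(\theta_n)\le\mu<1$ forces $\theta_n\ge\arccos(\mu)>0$ for all $n$, both $\limsup_{n\to-\infty}\theta_n$ and $\limsup_{n\to\infty}\theta_n$ are at least $\arccos(\mu)$, so $0<\min\{\Omega,\Theta\}$ follows with nothing further to check.

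For the remaining inequalities $0\le\min\{\tilde\Omega,\tilde\Theta\}$, the key step is to bound $\Delta_n$ away from zero uniformly. From the formula for $\Delta_n$ accompanying \eqref{cosadeequiv}, namely $\Delta_n=(1-\cos(\theta_n))\left(\frac{\zeta}{\lambda+\zeta}\right)^{2}$, the bound $\cos(\theta_n)\in[0,\mu]$ gives
\[(1-\mu)\left(\frac{\zeta}{\lambda+\zeta}\right)^{2}\le\Delta_n\le\left(\frac{\zeta}{\lambda+\zeta}\right)^{2}\]
for all $n$. Thus $\log\frac{\Delta_{\pm n}}{2}$ stays bounded, so $\frac{1}{n}\log\frac{\Delta_{\pm n}}{2}\to 0$. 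Next I would invoke the contraction estimate $\tau_n<\tilde\lambda<1$ proved in Section 3, and, by the symmetric argument for the stable case, $\varsigma_n<\tilde\lambda<1$; these yield $\log(\tau_n^{-1})>\log(\tilde\lambda^{-1})>0$ and likewise for $\varsigma_n$. Splitting
\[\frac{1}{n}\log\left[\frac{\Delta_n}{2}\tau_0^{-1}\cdots\tau_{n-1}^{-1}\right]=\frac{1}{n}\log\frac{\Delta_n}{2}+\frac{1}{n}\sum_{j=0}^{n-1}\log(\tau_j^{-1}),\]
the first term tends to $0$ and the second is $\ge\log(\tilde\lambda^{-1})>0$, whence $\tilde\Theta\ge\log(\tilde\lambda^{-1})\ge 0$. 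The identical computation with $\Delta_{-n}$ and the product $\varsigma_0^{-1}\cdots\varsigma_{-n+1}^{-1}$ gives $\tilde\Omega\ge\log(\tilde\lambda^{-1})\ge 0$, completing the claim.

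I expect no genuine obstacle once the uniform lower bound on $\Delta_n$ is in hand; in fact every inequality comes out strict and is only relaxed to the stated $\ge 0$. The entire difficulty is concentrated in that bound, and I would emphasize that this is precisely where the property of the angles is indispensable: without it the angles $\theta_n$ may tend to $0$, driving $1-\cos(\theta_n)$ and hence $\Delta_n$ to zero, possibly at an exponential rate, in which case $\frac{1}{n}\log\frac{\Delta_{\pm n}}{2}$ need no longer vanish and $\tilde\Omega,\tilde\Theta$ could turn negative. The manipulations with $\tau_n$ and $\varsigma_n$ are routine and require only their uniform domination by $\tilde\lambda$, already available from Section 3.
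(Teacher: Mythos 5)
Your proof is correct and follows exactly the route the paper intends (the paper itself offers no written proof, dismissing the remark as ``not difficult''): the property of the angles bounds $\Delta_n=(1-\cos(\theta_n))\left(\frac{\zeta}{\lambda+\zeta}\right)^{2}$ uniformly between positive constants, so $\frac{1}{n}\log\frac{\Delta_{\pm n}}{2}\to 0$, and the bounds $\tau_n<\tilde{\lambda}$ and $\varsigma_n<\tilde{\lambda}$ from Section 3 force the averaged sums of $\log\tau_j^{-1}$ and $\log\varsigma_j^{-1}$ to be at least $\log\tilde{\lambda}^{-1}>0$, while $\cos(\theta_n)\le\mu<1$ gives $\theta_n\ge\arccos(\mu)>0$ and hence $\Omega,\Theta>0$. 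All four inequalities check out, with the strictness you note, and you correctly identify the uniform lower bound on $\Delta_n$ as the one place where the property of the angles is genuinely used.
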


We have by Lemmas \ref{limitedosangulos} and \ref{lema2deigualdade} that there exist two  sequences   of positive numbers $\delta^{\prime}=(\delta_{n}^{\prime} )_{n\in\mathbb{Z}} $ and $\epsilon^{\prime}=(\epsilon_{n}^{\prime} )_{n\in\mathbb{Z}} $ such that  \(\{\mathcal{W}^{u}(\textbf{\textit{f}}_{0}^{n}(p),\delta^{\prime} ):n\in\mathbb{Z}\}\) and \(\{\mathcal{W}^{s}(\textbf{\textit{f}}_{0}^{n}(p),\epsilon^{\prime}):n\in\mathbb{Z}\}\) are two-sided sequences of  admissible   manifolds, and furthermore $\mathcal{N}^{u}(p,\delta^{\prime} ) =\mathcal{W} ^{u}(p,\delta^{\prime} )$ and $\mathcal{N}^{s}(p,\epsilon^{\prime} ) =  \mathcal{W}^{s}(p,\epsilon^{\prime})$.  

\medskip

 Next we will prove that $\mathcal{N} ^{u}(p,\delta^{\prime})$ and $ \mathcal{N}^{s}(p,\epsilon^{\prime})$    depend  continuously on   $ p $.  
  
\begin{lem}\label{continuidadsub}  Let $(p_{m})_{m\in \mathbb{N}}$   be a  sequence in $ M_{0}$ converging to $p\in M_{0}$ when $m\rightarrow \infty$. If  $q_{m}\in   \mathcal{N}^{r}(p_{m},\eta)$ converges to $q\in B(p,\eta_{0})$ as $m\rightarrow \infty$,  then $q\in \mathcal{N}^{r}(p,\eta)$, for $r=s,u$. 
\end{lem}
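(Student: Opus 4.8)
The plan is to prove the stable case $r=s$; the unstable case $r=u$ is entirely analogous, replacing forward iterates $\textbf{\textit{f}}_{0}^{\,n}$ by backward iterates $\textbf{\textit{f}}_{0}^{\,-n}$, the quantity $\Theta_{p,q}$ by $\Omega_{p,q}$, and Theorem \ref{primerpropovariedade2}(iii) by Theorem \ref{primerpropovariedade}(iii). Under the standing hypotheses \eqref{cccc} the admissible manifolds exist at every base point of $M_{0}$ and, by Lemmas \ref{limitedosangulos} and \ref{lema2deigualdade}, $\mathcal{N}^{s}(\cdot,\eta)=\mathcal{W}^{s}(\cdot,\eta)$; in particular each $q_{m}$ lies on the admissible manifold $\mathcal{W}^{s}(p_{m},\eta)$. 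To conclude $q\in\mathcal{N}^{s}(p,\eta)$ I must verify two things: that $\textbf{\textit{f}}_{0}^{\,n}(q)\in B(\textbf{\textit{f}}_{0}^{\,n}(p),\eta_{n})$ for every $n\ge 0$, and that $\Theta_{p,q}<0$. A purely soft passage to the limit cannot give the second condition, since $\Theta_{p_{m},q_{m}}<0$ for each $m$ carries no information about $\Theta_{p,q}$ without uniform control; this is why the contraction estimates are essential.

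The heart of the argument is a contraction estimate that is \emph{uniform in the base point}. Applying Theorem \ref{primerpropovariedade2}(iii) to the pair $(p_{m},q_{m})$ gives, for every $k\ge 1$,
\[
 d(\textbf{\textit{f}}_{0}^{\,k}(q_{m}),\textbf{\textit{f}}_{0}^{\,k}(p_{m}))\le \frac{2}{\Delta_{0}}\,\varsigma_{k}\cdots\varsigma_{1}\,d(q_{m},p_{m})\le \frac{2}{\Delta_{0}}\,\tilde{\lambda}^{\,k}\,d(q_{m},p_{m}).
\]
The crucial point is that both $\Delta_{0}$ and $\tilde{\lambda}$ are independent of $m$: by \eqref{cosadeequiv} and \eqref{vainadelosangulos} the factor $\Delta_{0}$ depends only on the minimal angle $\theta_{0}$ of the component $M_{0}$, not on the chosen orbit, while the bound $\varsigma_{k}<\tilde{\lambda}<1$ holds uniformly, exactly as was shown for $\tau_{k}$. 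Hence the displayed inequality holds with one and the same constant for all $m$. Fixing $k$ and letting $m\to\infty$, and using that each $\textbf{\textit{f}}_{0}^{\,k}$ is a diffeomorphism (hence continuous) together with the continuity of the Riemannian distance, both sides converge and I obtain
\[
 d(\textbf{\textit{f}}_{0}^{\,k}(q),\textbf{\textit{f}}_{0}^{\,k}(p))\le \frac{2}{\Delta_{0}}\,\tilde{\lambda}^{\,k}\,d(q,p)\qquad\text{for all }k\ge 1.
\]

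From this limiting estimate both required conditions follow. Taking logarithms, dividing by $k$ and passing to the $\limsup$ yields $\Theta_{p,q}\le\log\tilde{\lambda}<0$. For the ball containments, the case $n=0$ is the hypothesis $q\in B(p,\eta_{0})$, which is strict; for $n\ge 1$ the strictness $d(q,p)<\eta_{0}$ propagates through the contraction to give $d(\textbf{\textit{f}}_{0}^{\,n}(q),\textbf{\textit{f}}_{0}^{\,n}(p))<\frac{2}{\Delta_{0}}\tilde{\lambda}^{\,n}\eta_{0}$, and the defining size relations of the admissible sequence $\eta$ (which prevent $\eta_{n}$ from decaying faster than the contraction rate) then yield $d(\textbf{\textit{f}}_{0}^{\,n}(q),\textbf{\textit{f}}_{0}^{\,n}(p))<\eta_{n}$, so $q\in\mathcal{N}^{s}(p,\eta)$. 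I expect the delicate point to be exactly this recovery of strictness: a naive limit only turns the open containments $\textbf{\textit{f}}_{0}^{\,n}(q_{m})\in B(\textbf{\textit{f}}_{0}^{\,n}(p_{m}),\eta_{n})$ into the closed bound $d(\textbf{\textit{f}}_{0}^{\,n}(q),\textbf{\textit{f}}_{0}^{\,n}(p))\le\eta_{n}$, so strictness must be regained from the level-$0$ hypothesis and the exponential contraction rather than from continuity alone. Securing the base-point independence of $\Delta_{0}$ and $\tilde{\lambda}$ is what makes that contraction survive the limit, and is the structural fact on which the whole proof rests.
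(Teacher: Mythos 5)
Your treatment of the decay condition $\Theta_{p,q}<0$ is correct and is, in substance, the same mechanism the paper uses, executed a bit more cleanly. The paper's proof sets $\omega=\sup_{m}\{\limsup_{n}\frac{1}{n}\log d(\textbf{\textit{f}}_{0}^{n}(q'),\textbf{\textit{f}}_{0}^{n}(p_{m})):q'\in\mathcal{N}^{s}(p_{m},\eta)\}$, asserts $\omega<0$, fixes $\beta\in(0,\exp(\omega))$, extracts a diagonal subsequence $m_{n}$ with $d(\textbf{\textit{f}}_{0}^{n}(p_{m_{n}}),\textbf{\textit{f}}_{0}^{n}(p))<\beta^{n}$ and $d(\textbf{\textit{f}}_{0}^{n}(q_{m_{n}}),\textbf{\textit{f}}_{0}^{n}(q))<\beta^{n}$, and concludes via the triangle inequality and the identity $\limsup\frac{1}{n}\log(a_{n}+b_{n})=\max\{\limsup\frac{1}{n}\log a_{n},\limsup\frac{1}{n}\log b_{n}\}$ that $\Theta_{p,q}\leq\max\{\log\beta,\omega\}=\omega<0$. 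You instead freeze $k$ and pass to the limit $m\to\infty$ in the uniform contraction of Theorem \ref{primerpropovariedade2}(iii); the base-point independence of $\Delta_{0}$ (it depends only on $\theta_{0}$, a minimum over $M_{0}$) and of $\tilde{\lambda}$, which you correctly isolate as the structural point, is exactly what underwrites the paper's rather terse claim that $\omega<0$. Both arguments equally presuppose the identification $\mathcal{N}^{s}(p_{m},\eta)=\mathcal{W}^{s}(p_{m},\eta)$ with a single sequence $\eta$ valid at every base point $p_{m}$, a uniformity the paper also takes for granted after \eqref{cccc}, so I do not count that against you.

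The genuine gap is in your recovery of the strict ball containments. The size relations do \emph{not} prevent $\eta_{n}$ from decaying faster than the contraction rate: on the stable side the constraint is $\epsilon_{n-1}\leq\frac{\mu_{n}^{-1}+\alpha\kappa_{n}}{1+\alpha}\epsilon_{n}$, i.e.\ $\epsilon_{n}\geq\frac{1+\alpha}{\mu_{n}^{-1}+\alpha\kappa_{n}}\epsilon_{n-1}$, and since $\frac{1+\alpha}{\mu_{n}^{-1}+\alpha\kappa_{n}}\leq\lambda(1+\alpha)=\frac{1+\lambda}{2}<\tilde{\lambda}$, the admissible sequence may legitimately decay \emph{strictly faster} than $\tilde{\lambda}^{n}$ — the inequality goes the wrong way for your purposes. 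Worse, $\Delta_{0}=(1-\cos(\theta_{0}))\bigl(\frac{\zeta}{\lambda+\zeta}\bigr)^{2}<1$, so your constant $\frac{2}{\Delta_{0}}$ exceeds $2$ and already the case $n=1$, namely $\frac{2}{\Delta_{0}}\tilde{\lambda}\,\eta_{0}\leq\eta_{1}$, has no justification; hence your claimed $d(\textbf{\textit{f}}_{0}^{n}(q),\textbf{\textit{f}}_{0}^{n}(p))<\eta_{n}$ does not follow from anything established. The honest fix is more modest: pass to the limit in $\textbf{\textit{f}}_{0}^{n}(q_{m})\in B(\textbf{\textit{f}}_{0}^{n}(p_{m}),\eta_{n})$ to get $d(\textbf{\textit{f}}_{0}^{n}(q),\textbf{\textit{f}}_{0}^{n}(p))\leq\eta_{n}$, which suffices if $B(\cdot,\eta_{n})$ is read as a closed ball — and note the paper itself silently does exactly this, since its proof verifies only $\Theta_{p,q}<0$ and never re-derives the containments. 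You were right to flag the open/closed boundary issue, but it is a defect shared with (indeed elided by) the paper's proof, not one your contraction estimate can repair, because nothing in \eqref{deltamcrecimiento}, its stable analogue, or \eqref{cccc} ties $\eta_{n}$ from below to $\tilde{\lambda}^{n}$.
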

\begin{proof}We will prove only  the stable case.  Set $$ \omega=\underset{p_{m},m \geq0}\sup \{\underset{n\rightarrow \infty}\limsup \frac{1}{n}\log d(\textbf{\textit{f}}_{0}^{n}(q^{\prime}),\textbf{\textit{f}}_{0}^{n}(p_{m})):q^{\prime}\in N^{s}(p_{m},\eta)\}.$$ By compactness of $M_{0}$, we have $ \omega<0.$  Fix $\beta\in(0,\exp( \omega))$. For each $n\in \mathbb{N},$ take  \(m_{n}\in \mathbb{N}\), with  \(m_{1}< \cdots <m_{n}<\cdots\),  such that 
\(d(\textbf{\textit{f}}_{0}^{n}(p_{m_{n}}),\textbf{\textit{f}}_{0}^{n}(p))< \beta^{n} \) and \( d(\textbf{\textit{f}}_{0}^{n}(q_{m_{n}}),\textbf{\textit{f}}_{0}^{n}(q))< \beta^{n} \).  
For every $n$ we have 
\begin{align*} \frac{1}{n}&\log (d(\textbf{\textit{f}}_{0}^{n}(q ),\textbf{\textit{f}}_{0}^{n}(p )))\\
&\leq \frac{1}{n}\log[d(\textbf{\textit{f}}_{0}^{n}(q ),\textbf{\textit{f}}_{0}^{n}(q_{m_n}))+d(\textbf{\textit{f}}_{0}^{n}(q_{m_{n}}),\textbf{\textit{f}}_{0}^{n}(p_{m_{n}}))+d(\textbf{\textit{f}}_{0}^{n}(p_{m_{n}}),\textbf{\textit{f}}_{0}^{n}(p))]. 
\end{align*}
Since $\underset{n\rightarrow \infty}\limsup\frac{1}{n}\log(a_{n}+b_{n})=\max\{\underset{n\rightarrow \infty}\limsup\frac{1}{n}\log(a_{n}),\underset{n\rightarrow \infty}\limsup\frac{1}{n}\log(b_{n})\}$ for any sequence of positive numbers $a_{n}$ and $b_{n}$, we have \[\underset{n\rightarrow \infty}\limsup\frac{1}{n} \log (d(\textbf{\textit{f}}_{0}^{n}(q ),\textbf{\textit{f}}_{0}^{n}(p )))\leq \max\{\log(\beta),\omega\} =\omega.\]
Consequently, 
$q\in \mathcal{N}^{s}(p,\eta)$.
\end{proof}

Finally, by   Theorems   \ref{primerpropovariedade} and \ref{primerpropovariedade2} and    Lemmas    \ref{limitedosangulos}-\ref{continuidadsub}, we obtain the following  local unstable and stable manifold theorems for Anosov families: 

\begin{teo}\label{variedadeinstave}  If     $(\textbf{M},\langle\cdot,\cdot\rangle, \textbf{\textit{f}}\,)$ admits a sequence of positive numbers $(\delta_{n})_{n\in\mathbb{Z}}$ satisfying  \eqref{cccc}  for   each $p\in M_{0}$, then there exists a sequence of positive numbers $\eta= (\eta_{n})_{n\in\mathbb{Z}},$ such that    $\mathcal{N}^{u}( \textbf{\textit{f}}_{0} ^{\, n}(p) ,\eta)$  is a differentiable submanifold of $M_{n}$ with:
 \begin{enumerate}[\upshape (i)]
\item   $T_{\textbf{\textit{f}}^{n}(p)}\mathcal{N}^{u}( \textbf{\textit{f}}^{\, n}_{0}(p),\eta)=E_{\textbf{\textit{f}}^{n}(p)} ^{u}$;
\item $f_{n-1} ^{-1}(\mathcal{N}^{u}(  \textbf{\textit{f}}_{0} ^{\, n}(p),\eta))\subseteq \mathcal{N}^{u}(  \textbf{\textit{f}}_{0} ^{\, n-1}(p),\eta )$;  
\item  if $q\in \mathcal{N}^{u}( p_{n+1},\delta )$, where $p_{n}=\textbf{\textit{f}}_{0}^{\, n}(p)$, and $k\geq0$ we have  
\begin{equation*} d(\textbf{\textit{f}}_{n+1}^{-(k+1)}(q),\textbf{\textit{f}}_{n+1}^{-(k+1)}(p_{n+1}))\leq \frac{2}{\Delta_{n+1}}\tilde{\lambda}^{k+1}d(q,p_{n+1}). 
\end{equation*}    
\item  $\mathcal{N}^{u}(p,\eta )$ depends continuously on $p.$   
\end{enumerate}
\end{teo}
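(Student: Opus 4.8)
The plan is to prove Theorem \ref{variedadeinstave} by identifying the local unstable set $\mathcal{N}^u(\textbf{\textit{f}}_0^{\,n}(p),\eta)$ with an admissible $(u,\alpha,\eta)$-manifold $\mathcal{W}^u(\textbf{\textit{f}}_0^{\,n}(p),\eta)$ constructed in Section 3, and then reading off properties (i)--(iv) from the results already available. First I would invoke Theorem \ref{primerpropovariedade}: since the hypothesis \eqref{deltamcrecimiento} is contained in \eqref{cccc}, this yields along the whole orbit $\{\textbf{\textit{f}}_0^{\,n}(p):n\in\mathbb{Z}\}$ a two-sided sequence of differentiable admissible unstable manifolds enjoying tangency to $E^u$, backward invariance, and the contraction estimate \eqref{desig1} with the constants $\tau_{n-k}\cdots\tau_n$.

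Next I would pin down the sequence $\eta$ by sandwiching $\mathcal{N}^u$ between two copies of $\mathcal{W}^u$. Because \eqref{cccc} supplies $\delta_{-n}\le\epsilon_{-n}$ for $n>0$ together with $0<\Omega$ and $0\le\tilde{\Omega}$, Lemma \ref{lema2deigualdade}(i) produces a sequence $\delta'=(\delta'_n)_{n\in\mathbb{Z}}$, with $\delta'_n\le\delta_n/3$, that still satisfies \eqref{deltamcrecimiento} and for which $\mathcal{N}^u(p,\delta')\subseteq\mathcal{W}^u(p,\delta')$. Since $\delta'$ again satisfies \eqref{deltamcrecimiento}, Theorem \ref{primerpropovariedade} applies verbatim to $\delta'$, and then Lemma \ref{limitedosangulos} gives the reverse inclusion $\mathcal{W}^u(p,\delta')\subseteq\mathcal{N}^u(p,\delta')$. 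Setting $\eta:=\delta'$ yields the identity $\mathcal{N}^u(\textbf{\textit{f}}_0^{\,n}(p),\eta)=\mathcal{W}^u(\textbf{\textit{f}}_0^{\,n}(p),\eta)$ for every $n$, which transfers the differentiable submanifold structure together with the tangency (i) and the backward invariance (ii) directly from the admissible manifold to the unstable set.

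For (iii) I would only observe that each factor obeys $\tau_j=\frac{1+\lambda}{2\lambda\kappa_j^{-1}-\omega_j(1+\lambda)}<\tilde{\lambda}$, as recorded just before Theorem \ref{primerpropovariedade}, so that $\tau_{n-k}\cdots\tau_n<\tilde{\lambda}^{\,k+1}$ and \eqref{desig1} sharpens to the stated bound involving $\tilde{\lambda}^{\,k+1}$. The continuous dependence (iv) is then precisely Lemma \ref{continuidadsub} applied with $r=u$, once the manifolds at nearby base points are all defined with the common sequence $\eta$.

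The main obstacle I anticipate is not any single estimate but the \emph{uniformity} of $\eta$ over $p\in M_0$. The sequence $\delta'$ coming out of Lemma \ref{lema2deigualdade} depends on $p$ through the subsequence $(n_i)$ realizing $\theta_{n_i}\ge\nu$ and through the injectivity radii $r(\textbf{\textit{f}}_0^{\,n}(p))$; to give meaning to Lemma \ref{continuidadsub}, which compares $\mathcal{N}^u(p_m,\eta)$ for $p_m\to p$ against one fixed $\eta$, I must extract a single $\eta$ valid for all base points simultaneously. Here I would lean on the compactness of $M_0$, the uniform lower bound for the angles $\theta_n$ on each component from \eqref{vainadelosangulos}, and the continuity in the base point of $\Delta_n$, $\tau_n$, $\mu_n$, $\kappa_n$, choosing $\eta$ from a finite subcover; the delicate point is checking that such a uniform choice still satisfies \eqref{deltamcrecimiento} and the hypotheses of Lemma \ref{lema2deigualdade} for every $p$ at once.
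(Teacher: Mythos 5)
Your proposal follows essentially the same route as the paper, which obtains Theorem \ref{variedadeinstave} precisely by combining Theorem \ref{primerpropovariedade} (structure, tangency, invariance, and the estimate \eqref{desig1}), the sandwich $\mathcal{W}^{u}(p,\delta')\subseteq\mathcal{N}^{u}(p,\delta')$ and $\mathcal{N}^{u}(p,\delta')\subseteq\mathcal{W}^{u}(p,\delta')$ from Lemmas \ref{limitedosangulos} and \ref{lema2deigualdade}, the bound $\tau_{j}<\tilde{\lambda}$ for (iii), and Lemma \ref{continuidadsub} for (iv). The uniformity of $\eta$ over $p\in M_{0}$ that you flag is a genuine subtlety which the paper itself leaves implicit, and your suggested remedy via compactness of $M_{0}$ and continuity of the data in the base point is the natural way to close it.
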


 \begin{teo}\label{variedadeestavel}    If     $(\textbf{M},\langle\cdot,\cdot\rangle, \textbf{\textit{f}}\,)$ admits a sequence of positive numbers $(\epsilon_{n})_{n\in\mathbb{Z}}$ satisfying   \eqref{cccc}  for    each $p\in M_{0}$, then  there exists a sequence of positive numbers $\eta= (\eta_{n})_{n\in\mathbb{Z}},$ such that    $\mathcal{N}^{s}( \textbf{\textit{f}}_{0} ^{\, n}(p) ,\eta)$  is a differentiable submanifold of $M_{n}$ with:
\begin{enumerate}[\upshape (i)]
\item   $T_{\textbf{\textit{f}}^{n}(p)}\mathcal{N}^{s}( \textbf{\textit{f}}^{\, n}_{0}(p),\eta)=E_{\textbf{\textit{f}}^{n}(p)} ^{s}$;   
\item $f_{n}  (\mathcal{N}^{s}( \textbf{\textit{f}}_{0} ^{\, n}(p),\eta))\subseteq \mathcal{N}^{s}(  \textbf{\textit{f}}_{0} ^{\, n+1}(p),\eta)$; 
\item if  $q\in \mathcal{N}^{s}( p_{n},\epsilon)$, where $p_{n}=\textbf{\textit{f}}_{0}^{\, n}(p)$, and $k\geq1$ we have  
\begin{equation*} d(\textbf{\textit{f}}_{n}^{k}(q),\textbf{\textit{f}}_{n}^{k}(p_{n}))\leq \frac{2}{\Delta_{n}}\tilde{\lambda}^{k}d(q,p_{n}),
\end{equation*} 
\item $\mathcal{N}^{s}(p,\eta )$ depends continuously on $p.$    
\end{enumerate}
\end{teo}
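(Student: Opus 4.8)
The plan is to assemble the stable counterparts of the results of Sections 3 and 4: I would identify the local stable set $\mathcal{N}^{s}$ with the admissible stable manifold $\mathcal{W}^{s}$ produced in Theorem \ref{primerpropovariedade2}, and then transport the properties of the latter onto the former. First I would fix $p\in M_{0}$ (every point of $\textbf{M}$ reduces to this case after an index shift, exactly as in the opening of Section 3) and record that the standing hypothesis \eqref{cccc} supplies precisely the data needed downstream: the sequence $(\epsilon_{n})_{n\in\mathbb{Z}}$ demanded by Theorem \ref{primerpropovariedade2}, the comparison $\epsilon_{n}\leq\delta_{n}$ for $n\geq0$, and the inequalities $\Theta>0$ and $\tilde{\Theta}\geq0$ required by Lemma \ref{lema2deigualdade}(ii). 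Applying Theorem \ref{primerpropovariedade2} at every iterate $\textbf{\textit{f}}_{0}^{n}(p)$ then yields the two-sided family of admissible $(s,\alpha,\epsilon)$-manifolds $\{\mathcal{W}^{s}(\textbf{\textit{f}}_{0}^{n}(p),\epsilon)\}_{n\in\mathbb{Z}}$, each a differentiable submanifold of $M_{n}$ of size $2\epsilon_{n}$, tangent to $E^{s}_{\textbf{\textit{f}}_{0}^{n}(p)}$, forward invariant in the sense $f_{n}(\mathcal{W}^{s}(\textbf{\textit{f}}_{0}^{n}(p),\epsilon))\subseteq\mathcal{W}^{s}(\textbf{\textit{f}}_{0}^{n+1}(p),\epsilon)$, and satisfying the contraction bound \eqref{desig121}.

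Second I would establish the identification $\mathcal{N}^{s}=\mathcal{W}^{s}$. Lemma \ref{lema2deigualdade}(ii) furnishes a sequence $\epsilon^{\prime}=(\epsilon^{\prime}_{n})_{n\in\mathbb{Z}}$ with $\mathcal{N}^{s}(p,\epsilon^{\prime})\subseteq\mathcal{W}^{s}(p,\epsilon^{\prime})$, while Lemma \ref{limitedosangulos} gives the reverse inclusion $\mathcal{W}^{s}(p,\epsilon^{\prime})\subseteq\mathcal{N}^{s}(p,\epsilon^{\prime})$; hence the two sets coincide. Setting $\eta:=\epsilon^{\prime}$ and repeating the argument at each $\textbf{\textit{f}}_{0}^{n}(p)$ (again by shifting the base index) gives $\mathcal{N}^{s}(\textbf{\textit{f}}_{0}^{n}(p),\eta)=\mathcal{W}^{s}(\textbf{\textit{f}}_{0}^{n}(p),\eta)$ for every $n$. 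With this equality the differentiable submanifold structure and properties (i) and (ii) transfer verbatim from Theorem \ref{primerpropovariedade2}. For (iii), the estimate \eqref{desig121} bounds the $k$-fold forward contraction by $\frac{2}{\Delta_{n}}\varsigma_{n+k}\cdots\varsigma_{n+1}$; since each $\varsigma_{k}=\frac{1+\lambda}{2\lambda\mu_{k}^{-1}-\varpi_{k}(1+\lambda)}<\tilde{\lambda}$ (shown exactly as for $\tau_{n}$, using $\varpi_{k}\leq\frac{2\lambda\tilde{\lambda}\mu_{k}^{-1}-1-\lambda}{1+\lambda}$), the product is dominated by $\tilde{\lambda}^{k}$, which produces the cleaner bound in the statement.

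For (iv) I would invoke Lemma \ref{continuidadsub} with $r=s$. That lemma is a closedness statement: limits of points lying in the stable sets of nearby base points remain in the stable set of the limit base point. To upgrade this to genuine continuous dependence I would use the graph description of $\mathcal{N}^{s}=\mathcal{W}^{s}$ in exponential charts, namely that each manifold is the image under $\exp$ of the graph of the fixed point of the graph-transform contraction $\textsf{G}$ (stable analogue); this fixed point varies continuously with the base point $p$ because the defining data $G_{n},c_{n},d_{n}$ do, while $\textsf{G}$ contracts uniformly with constant $\gamma$. Lemma \ref{continuidadsub} then pins down the set-theoretic limit, and together these give the asserted continuity.

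The main obstacle I anticipate is not any individual estimate --- those are delivered by the lemmas --- but the bookkeeping of the nested sequences $(\delta_{n})$, $(\epsilon_{n})$, $(\epsilon^{\prime}_{n})$ and $(\eta_{n})$: one must verify that \eqref{cccc} is preserved under the index shifts used to carry the base point along the orbit, so that a \emph{single} sequence $\eta$ simultaneously realizes the equality $\mathcal{N}^{s}=\mathcal{W}^{s}$ at every $\textbf{\textit{f}}_{0}^{n}(p)$ and also drives the continuity argument. Within (iv) the delicate point is precisely that Lemma \ref{continuidadsub} by itself only yields upper semicontinuity, so it is the graph-transform continuity of the fixed point that actually supplies the full continuous-dependence conclusion.
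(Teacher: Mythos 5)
Your proposal is correct and follows essentially the same route as the paper: Theorem \ref{variedadeestavel} is obtained there precisely as an assembly of Theorem \ref{primerpropovariedade2} with the identification $\mathcal{N}^{s}=\mathcal{W}^{s}$ supplied by Lemmas \ref{limitedosangulos} and \ref{lema2deigualdade}(ii) under the standing hypothesis \eqref{cccc}, the estimate $\varsigma_{k}<\tilde{\lambda}$ converting \eqref{desig121} into the bound in (iii), and Lemma \ref{continuidadsub} yielding (iv). Your added remark that Lemma \ref{continuidadsub} alone is only a closedness statement, which you patch via continuity of the graph-transform fixed point, is a sound refinement of a step the paper leaves implicit.
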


 The present work was carried out with the support of   the Conselho Nacional de Desenvolvimento Cient\'ifico  e Tecnol\'ogico - Brasil (CNPq) and the Coordena\c c\~ao de Aperfei\c coamento de Pessoal de N\'ivel Superior  (CAPES). The author would like to thank the institution   Universidade de S\~ao Paulo (USP)    for their   hospitality and support  during the course of the writing. Special thanks for A. Fisher, my doctoral advisor, who has inspired and aided me along the way.

\end{document}